\providecommand\@dotsep{5}
\def\listtodoname{List of Todos}
\def\listoftodos{\@starttoc{tdo}\listtodoname}
\newcommand{\eps}{\varepsilon}
\newcommand{\R}{\mathbb{R}}
\newcommand{\RN}{{\mathbb{R}^N}}
\newcommand{\RD}{{\mathbb{R}^2}}
\newcommand{\RT}{{\mathbb{R}^3}}
\renewcommand{\le}{\leslant}
\renewcommand{\ge}{\geslant}
\renewcommand{\a }{\alpha }
\renewcommand{\b }{\beta }
\renewcommand{\d }{\delta }
\newcommand{\g }{\gamma }
\renewcommand{\l }{\lambda}
\newcommand{\n }{\nabla }
\newcommand{\G}{\Gamma}
\renewcommand{\H}{H^1(\RD)}
\newcommand{\Ha}{\mathcal H}
\newcommand{\Har}{\mathcal H_r}
\renewcommand{\o}{\omega}
\newcommand{\D }{{\mathcal D}^{1,2}(\RT)}
\newcommand{\ird }{\int_{\RD}}
\newcommand{\irt }{\int_{\RT}}
\def\bbm[#1]{\mbox{\boldmath $#1$}}
\newcommand{\beq }{\begin{equation}}
\newcommand{\eeq }{\end{equation}}
\renewcommand{\le}{\leqslant}
\renewcommand{\ge}{\geqslant}
\newtheorem{theorem}{Theorem}[section]
\newtheorem{lemma}[theorem]{Lemma}
\newtheorem{proposition}[theorem]{Proposition}
\newtheorem{remark}[theorem]{Remark}
\newtheorem{corollary}[theorem]{Corollary}
\title[Planar S-P system with a positive potential]{The planar Schr\"odinger - Poisson system with a positive potential}
\author[A. Azzollini]{Antonio Azzollini}
\address{Dipartimento di Matematica, Informatica ed Economia, Universit\`a degli
	Studi della Basilicata,
	\newline\indent
	Via dell'Ateneo Lucano 10, I-85100
	Potenza, Italy}
\email{antonio.azzollini@unibas.it}
\thanks{}
\subjclass[2010]{35J50, 35Q40}
\keywords{Schr\"odinger-Poisson system; logarithmic convolution potential; standing wave solutions}
\begin{document}

	\begin{abstract}
		In this paper we consider the problem 
				\begin{equation*}
		\left	\{
		\begin{array}{l}
			-\Delta u  \pm \phi u +W'(x,u)=0\hbox{ in } \R^2,	\\
			\Delta \phi = u^2 \hbox{ in } \R^2,
		\end{array}
		\right.
		\end{equation*}
 where $W$ is assumed positive. In dimension three, the problem with the sign + (we call it $(\mathcal P_+)$) was considered and solved in \cite{M}, whereas in the same paper it was showed that no nontrivial solution exists if we consider the sign - (say it $(\mathcal P_-)$).
 
 We provide a general existence result for $(\mathcal P_+)$ and two examples falling in the case $(\mathcal P_-)$ for which there exists at least a nontrivial solution.

	\end{abstract}

\maketitle

	\section{Introduction}
	
We are interested in finding finite energy solutions to the following class of problems
		\beq \label{e0}\tag{${\mathcal{P}}_{\pm}$}
	\left	\{
	\begin{array}{l}
		-\Delta u  \pm \phi u +W'(x,u)=0\hbox{ in } \R^2,	\\
		\Delta \phi = u^2 \hbox{ in } \R^2,
	\end{array}
	\right.
	\eeq
where $W=W(x,s):\RD\times\R\to\R_+$ and $W'$ denotes the derivative of $W$ with respect to $s$.\\ 
We will refer to problem \eqref{e0}  by $({\mathcal{P}}_+)$ or $({\mathcal{P}}_-)$ according to the sign before the second term in the first equation.

The attention on this kind of problems has increased in the recent period, starting from the paper of Stubbe \cite{S} where firstly a system of this type was proposed. Actually, in dimension $3$ or larger, the system is well known and there is a wide literature on it. See for example \cite{AR, ADP, AP, DM, R, K}.

In dimension $2$ the system is definitely less known and studied, although its peculiar features and significant differences arising by a comparison with the three dimensional analogous problem, are of great interest. The variational approach developed by Stubbe was used in \cite{CW} and \cite{DW} to study $(\mathcal P_+)$ for a changing sign potential as $W(x,u)=a(x)u^2-b|u|^p$ where $a\in L^{\infty}(\RD,\R_+\setminus\{0\})$ (possibly $a$ is a positive constant) and $b\in \R_+$. Recently, in \cite{WCR} the problem $(\mathcal P_+)$ has been studied in the so called {\it zero mass case}. A class of nonlinearities $W$ have been considered, superquadratic at the origin, growing no more than a power and satisfying the following technical assumption
	\begin{itemize}
		\item[$\mathcal A)$]the function $\frac{W(s)-sW'(s)}{s^3}$ is nondecreasing in $(-\infty, 0)$ and in $(0,+\infty)$.
	\end{itemize}
Up to our knowledge, the literature on problem $(\mathcal P_-)$ is definitely missing.

In this paper we are interested in investigating \eqref{e0} when $W\ge 0$.
The interest in problems in presence of nonnegative potentials $W$ increased in recent years because of  physical considerations on models described by similar equations in three dimensions (see the introduction in \cite{M}). As an example, in \cite{CLL} it was studied an equation strictly related with the problem $(\mathcal P_+)$ in $\RT$, with $W(u)=\frac 1 2(\o u^2 + |u|^{\frac {10}3}) $.  It was introduced to describe a  Hartree model for crystals.\\
Our object is to emphasize analogies and, especially, differences occurring when we consider positive potentials $W$ in \eqref{e0} . \\

Consider firstly the problem $(\mathcal P_+)$.
We introduce the following assumptions on $W$: 
	\begin{itemize}
		\item [$W1)$] $W=W(s)\in C^1(\R)$,
		\item [$W2)$] $W$ is nonnegative and $W(0)=0$,
		\item [$W3)$] there exist $p\in(2,3)$, $C_1$ and  $C_2$ positive constants such that $|W'(s)|\le C_1|s|+C_2|s|^{p-1}$,
		\item [and]
		\item [$W3)'$] there exist $p\in(2,4)$, $C_1$ and  $C_2$ positive constants such that $W(s)\le C_1s^2+C_2|s|^p$, and for any $s\in\R$ we have $0\le W'(s)s\le 4 W(s)$.
	\end{itemize}
	\begin{remark}
		Observe that the assumptions in \cite{WCR}, and in particular $\mathcal A$, do not exclude the possibility that $W$ might be nonnegative (consider, for instance, $W(s)=\frac 25 |s|^{\frac 52}$). However we see that, assuming the following Ambrosetti Rabinowitz condition (which is a something slightly stronger than superlinearity at infinity)
			\begin{itemize}
					\item[$\mathcal AR$)] there exists $\d>1$ and $R>0$ such that $W(s)\d\le W'(s)s$ for $|s|>R$,  
			\end{itemize}  
		assumption $\mathcal A$ implies that $|W'(s)|=O(s^2)$ for $|s|\to +\infty$, which almost corresponds to $W3$. For this reason our conditons are in some sense more general (and less technical) than those in \cite{WCR}, in the case of positive potentials. 
	\end{remark}
The system has a variational structure. Indeed, set $\phi_u= \frac 1 {2\pi} \log|x|\star u^2$ for any $u\in\H$, then solutions of \eqref{eq:e0} can be found looking for critical points of the functional 
 	\begin{equation}
	 	I(u)=\frac 12 \ird |\n u|^2 \, dx +  \frac 1 4 \ird \phi_u u^2\, dx+\ird W(u)\, dx
 	\end{equation}
 which is well defined and $C^1$ in the space
 	\begin{equation*}
	 	E:=\H\cap L^2(\RD, V(x)\,dx),
 	\end{equation*}
where $L^2(\RD, V(x)\,dx)$ is the weighted Lebesgue space with the weight $V(x)= \log(2+ |x|)$, endowed with the norm $\|u\|=\left(\|\n u\|_2^2 + \ird \log (2+|x|) u^2\, dx\right)^{\frac 12}$.
\\
To understand the relation between critical points of $I$ and solutions of  $\mathcal P_+$ we refer to \cite[Proposition 2.3]{CW}.

By the coercivity of $V$, it is a classical result the compact embedding
	\begin{equation*}
	E\hookrightarrow L^q(\RD), \quad \forall q\ge 2.
	\end{equation*} 

Assuming the following usual notations (see \cite{CW, DW, S, WCR}):
	\begin{equation}\label{eq:V}
		\begin{array}{l}
			V_0(u)=\frac 1{2\pi} \ird\ird \log(|x-y|) u^2(x) u^2(y)\, dx\,dy,\\
			V_1(u)=\frac 1{2\pi} \ird\ird \log(2+|x-y|) u^2(x) u^2(y)\, dx\,dy,\\
			V_2(u)=\frac 1{2\pi} \ird\ird \log\left(1+\frac 2{|x-y|}\right) u^2(x) u^2(y)\, dx\,dy,\\
		\end{array}
	\end{equation}
we have 
	\begin{equation*}
		I(u)=\frac 12 \ird |\n u|^2 \, dx +  \frac 1 4V_0(u)+\ird W(u)\, dx
	\end{equation*}	
and $V_0=V_1-V_2$. Moreover, as proved in \cite{S}, for any $u\in E$
	\begin{equation}\label{eq:estV2}
			V_2(u)\le C \|u\|^4_{8/3} \le \tilde C	\|\n u\|_2\|u\|^3_2
	\end{equation}
and then  for any $\eps>0$ there exists $C_\eps >0$ such that 
	\begin{equation}\label{eq:estV22}
		V_2(u)\le \eps \|\n u\|_2^2 + C_\eps \|u\|_2^6.
	\end{equation}
	
	\begin{theorem}\label{main}
		Assume $W1, W2$ and either $W3$ or $W3'$. Then there exists a nontrivial solution to $(\mathcal P_+)$.
	\end{theorem}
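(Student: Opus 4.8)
The plan is to find a critical point of $I$ on $E$ via a constrained minimization or a mountain pass scheme, exploiting the compact embedding $E\hookrightarrow L^q(\RD)$ for all $q\ge 2$. First I would verify the geometric hypotheses of the mountain pass theorem for $I$ on $E$. By $W2$ we have $I(0)=0$. For the local minimum structure near the origin, I would use that $W(s)\le C_1 s^2 + C_2|s|^p$ (this follows from $W3'$ directly, and from $W3$ by integrating $W'$), together with the estimate \eqref{eq:estV22} to absorb part of the $V_2$ contribution into the gradient term; writing $V_0=V_1-V_2$ and noting $V_1\ge 0$, one gets $I(u)\ge \frac14\|\n u\|_2^2 - \eps'\|u\|^2_{\cdot} - (\text{higher order})$ on a small sphere, so there exist $\rho,\alpha>0$ with $I(u)\ge\alpha$ for $\|u\|=\rho$. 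For the second geometric condition I would fix some $u_0\in E\setminus\{0\}$ (e.g. a smooth compactly supported function) and estimate $I(tu_0)$ as $t\to+\infty$: the gradient term grows like $t^2$, the nonlinearity $\ird W(tu_0)$ is controlled using $W'(s)s\le 4W(s)$ which gives $W(s)\le W(s_0)(s/s_0)^4$ type growth only as an upper bound — here the difficulty is that $W\ge 0$ does not help push $I$ down.

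Because the potential $W$ is \emph{positive}, the standard Ambrosetti–Rabinowitz-type coercivity used to send $I(tu_0)\to-\infty$ is not available. The key observation, and I expect this to be the main obstacle, is that the logarithmic term $V_0$ is \emph{not} sign-definite: $V_0(u)=V_1(u)-V_2(u)$, and while $V_1\ge 0$ the combination can be made very negative by spreading the mass of $u$ out. Concretely, for a fixed profile $u_0$ and a dilation $u_0(\cdot/\lambda)$, the quantity $V_0$ contains a term proportional to $-\log\lambda\cdot\left(\ird u_0^2\right)^2$ which tends to $-\infty$ as $\lambda\to+\infty$, and this divergence beats the polynomial-in-$\lambda$ growth of $\ird|\n u|^2$ and $\ird W(u)$ (using $W3$ or $W3'$ to bound the latter by a power of $\lambda$). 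So the mountain-pass path should be built using a scaling/dilation family rather than a simple ray $tu_0$: take $u_\lambda = t\, u_0(\cdot/\lambda)$ and show that for suitable large $t$ and $\lambda$, $I(u_\lambda)<0$ while the path from $0$ to $u_\lambda$ crosses the sphere $\|u\|=\rho$.

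Having established the mountain pass geometry, I would obtain a Palais–Smale (or Cerami) sequence $(u_n)$ at the min-max level $c>0$. The compactness step is where the weighted space $E$ pays off: from the Cerami condition one extracts a bound on $\|\n u_n\|_2$ and on $\ird W(u_n)$, and — using $W'(s)s\le 4W(s)$ in the $W3'$ case, or the subcritical growth $p<3$ in the $W3$ case together with \eqref{eq:estV2}–\eqref{eq:estV22} — one controls $V_0(u_n)$ and hence $\ird\log(2+|x|)u_n^2\,dx$, giving boundedness of $(u_n)$ in $E$. Then the compact embedding $E\hookrightarrow L^q(\RD)$ for all $q\ge 2$ lets me pass to the limit: $u_n\to u$ strongly in every $L^q$, the terms $\ird W'(u_n)v$ and the quadratic-in-$u^2$ logarithmic terms converge (handling $V_1$ and $V_2$ separately, with $V_2$ controlled via \eqref{eq:estV2}), so $u$ is a critical point of $I$. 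Finally $I(u)=c>0=I(0)$ forces $u\not\equiv 0$, and by \cite[Proposition 2.3]{CW} the pair $(u,\phi_u)$ solves $(\mathcal P_+)$. The two cases $W3$ and $W3'$ differ only in the technical estimates for the nonlinear term and in verifying boundedness of the Cerami sequence; the geometric and compactness skeleton is the same.
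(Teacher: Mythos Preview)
Your overall architecture (mountain pass on $E$, then compactness via the weighted embedding) is right, but there is a real gap at the boundedness step, and this is precisely where the paper's proof differs from yours.

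First, a small but genuine error in the geometry: spreading $u_0(\cdot/\lambda)$ makes $V_0$ go to $+\infty$, not $-\infty$. A change of variables gives $V_0\big(u_0(\cdot/\lambda)\big)=\lambda^4\big(V_0(u_0)+\tfrac{\log\lambda}{2\pi}\|u_0\|_2^4\big)$, so the sign in front of $\log\lambda$ is positive. Since $I$ contains $+\tfrac14 V_0$, spreading sends $I$ to $+\infty$. The paper instead \emph{concentrates}: with $u_t(x)=t^2u(tx)$ one has $V_0(u_t)=t^4\big(V_0(u)-\tfrac{\log t}{2\pi}\|u\|_2^4\big)$, so $V_0(u_{\bar t})<0$ for some $\bar t$, and then a multiplicative scaling $h\mapsto h\,u_{\bar t}$ (using $W(s)\le C_1s^2+C_2|s|^p$ with $p<4$) pushes $I$ below zero.

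The substantive gap is in your claim that ``from the Cerami condition one extracts a bound on $\|\nabla u_n\|_2$ \ldots\ and one controls $V_0(u_n)$''. With only the two relations $I(u_n)=c+o(1)$ and $\langle I'(u_n),u_n\rangle=o(1)$, the unique linear combination that kills the nonlocal term $V_0$ is $4I(u_n)-\langle I'(u_n),u_n\rangle$, which yields
\[
\|\nabla u_n\|_2^2+\int\big(4W(u_n)-W'(u_n)u_n\big)=4c+o(1).
\]
Under $W3'$ the integrand is nonnegative and this does give a gradient bound (and then an $L^2$ bound follows from Nehari plus $V_1\ge c\|u\|_2^4$ and \eqref{eq:estV2}); but under $W3$ there is no sign information on $4W-W's$, and the remaining estimates do not close. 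The paper resolves this by \emph{not} working with a raw PS sequence: it applies Jeanjean's monotonicity trick (Theorem~\ref{JJ}) to the family $I_\lambda=A-\lambda B$ with $B=\tfrac14 V_2$, obtaining for a.e.\ $\lambda$ a bounded PS sequence and hence an \emph{actual critical point} $u_n$ of $I_{\lambda_n}$, $\lambda_n\nearrow 1$. Being solutions, these $u_n$ satisfy the Pohozaev identity \eqref{eq:poho}, which supplies a third independent relation. A specific linear combination of the functional value \eqref{eq:func}, the Nehari identity \eqref{eq:neh}, and Pohozaev \eqref{eq:poho2} then eliminates both $V_1$ and $V_2$ simultaneously and, together with Gagliardo--Nirenberg and the restriction $p<3$ in $W3$, yields $H^1$-boundedness. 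Your outline omits this mechanism entirely; for the $W3$ case in particular it is not a cosmetic detail but the heart of the argument.
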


		Comparing conditions $W3$ and $W3'$, we observe that the price to obtain a better growth condition at infinity consists in introducing a reversed Ambrosetti Rabinowitz condition. However, because of difficulties in proving the geometry of mountain pass, we are not able to achieve the power four as an admissible growth degree.\\

As to the problem $(\mathcal{P}_-)$, since we were not able to find any reference, let us say few words to introduce the motivation of our study. Actually, the problem is the two-dimension version of what, in three dimension, is known as the Schr\"odinger-Maxwell system in the electrostatic case. To explain the difficulties arising in treating it, we analyze by an example how the problem should present if we assumed the same hypotheses as in Theorem \ref{main}.  

Consider for instance the problem
\begin{equation}\label{eq:e0}
\left	\{
\begin{array}{l}
-\Delta u  -\phi u +|u|^{p-2}u=0\hbox{ in } \R^2,	\\
\Delta \phi = u^2 \hbox{ in } \R^2,
\end{array}
\right.
\end{equation}
and the related formal functional 
\begin{equation}\label{eq:functionall}
I(u)=\frac 12 \ird |\n u|^2\, dx  -  \frac 1 4V_0(u)+\frac 1 p \ird |u|^p\, dx
\end{equation}
well defined and $C^1$ in the space $E$. It is easily seen that the functional is strongly indefinite, and this indefinitness can not be removed by a compact perturbation. An idea to remove the indefiniteness and recover nice geometrical properties is to modify suitably \eqref{eq:e0}. In view of this, we proceed by following two ways: either we introduce a suitable nonautonomous linear perturbation, or we add a sublinear term. 

As regards  the first way, consider  nonlinearities of the type $W(x,s)=\frac 12 K(x)s^2 + \frac 1p |s|^p$, where a suitable growth condition on $K$  to control the term $V_1(u)$ is in order. Actually, since the growth of the weight in the norm of the space $E$ is logarithmic, any power-like function (with positive exponent) is a good candidate to take on the role of $K$. Motivated by these reasons, we introduce the model problem
\beq \label{eq:e1}\tag{$\mathcal{P}_{-}^\a$}
\left	\{
\begin{array}{l}
	-\Delta u  +(1+|x|^\a) u- \phi u +|u|^{p-2}u=0\hbox{ in } \R^2,	\\
	\Delta \phi = u^2 \hbox{ in } \R^2.
\end{array}
\right.
\eeq

We can prove the following result
\begin{theorem}\label{main2}
	Assume $2<p$ and either $\a \in \left(0,\frac{2p-4}{p-4}\right]$ and $p>4$, or $\a >0$ and $2<p\le 4$. Then \eqref{eq:e1} possesses a nontrivial  solution.
\end{theorem}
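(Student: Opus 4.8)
The plan is to produce a nontrivial critical point of the energy functional associated with \eqref{eq:e1} on the weighted Sobolev space $E_\alpha:=\H\cap L^2(\RD,(1+|x|^\alpha)\,dx)$, normed by $\|u\|_\alpha^2:=\|\n u\|_2^2+\ird(1+|x|^\alpha)u^2\,dx$. Since $\alpha>0$ one has $\log(2+|x|)\le C_\alpha(1+|x|^\alpha)$, so $E_\alpha\hookrightarrow E$ continuously, and exactly as in the case of $E$ the coercivity of the weight gives the compact embedding $E_\alpha\hookrightarrow L^q(\RD)$ for every $q\ge 2$. On $E_\alpha$ the functional
\[
I(u)=\tfrac12\|u\|_\alpha^2+\tfrac1p\ird|u|^p\,dx-\tfrac14 V_0(u)
\]
is of class $C^1$ (the nonlocal term is treated as in \cite{CW,S}), and the identities $V_0=V_1-V_2$, $V_2(u)\le C\|u\|_{8/3}^4$ together with $\log(2+|x-y|)\le\log(2+|x|)+\log(2+|y|)$ yield the basic bound
\[
0\le V_1(u)\le\tfrac1\pi\|u\|_2^2\ird\log(2+|x|)\,u^2\,dx\le C_\alpha\|u\|_2^2\|u\|_\alpha^2\le C_\alpha\|u\|_\alpha^4 .
\]
Critical points of $I$ on $E_\alpha$ solve \eqref{eq:e1}, and any critical point with $I(u)>0$ is nontrivial.

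I would first check the mountain pass geometry. For the valley, dropping the nonnegative terms $\tfrac1p\|u\|_p^p$ and $\tfrac14 V_2(u)$ and using the bound above, $I(u)\ge\tfrac12\|u\|_\alpha^2-\tfrac{C_\alpha}4\|u\|_\alpha^4$, so $I$ is bounded below by a positive constant on a small sphere $\|u\|_\alpha=\rho$. For the mountain, fix $0\ne u\in C_c^\infty(\RD)$ and set $w_\lambda(x):=u(x/\lambda)$; scaling gives $\|\n w_\lambda\|_2^2=\|\n u\|_2^2$, $\ird(1+|x|^\alpha)w_\lambda^2=\lambda^2\|u\|_2^2+\lambda^{2+\alpha}\ird|x|^\alpha u^2$, $\|w_\lambda\|_p^p=\lambda^2\|u\|_p^p$ and $V_0(w_\lambda)=\lambda^4(\tfrac{\log\lambda}{2\pi}\|u\|_2^4+V_0(u))$. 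Hence, taking $e=t\,w_\lambda$, up to lower-order terms $I(t\,w_\lambda)\approx\tfrac{t^2}{2}\,c_1\lambda^{2+\alpha}+\tfrac{t^p}{p}\,c_2\lambda^2-\tfrac{t^4}{4}\,c_3\lambda^4\log\lambda$ with $c_i>0$. Writing $t=\lambda^\gamma$, the negative term dominates both positive ones as $\lambda\to+\infty$ exactly when $\tfrac{\alpha-2}{2}\le\gamma$ and $p\gamma+2\le4\gamma+4$; when $2<p\le4$ the latter holds for every $\gamma\ge0$, so one may take $\gamma=\max\{0,(\alpha-2)/2\}$ and any $\alpha>0$ works, whereas when $p>4$ it forces $\gamma\le\tfrac{2}{p-4}$, so an admissible $\gamma$ exists iff $\tfrac{\alpha-2}{2}\le\tfrac2{p-4}$, i.e. $\alpha\le\tfrac{2p-4}{p-4}$ --- precisely the hypothesis (the boundary case being saved by the $\log\lambda$ factor). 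In all admissible cases we obtain $e$ with $\|e\|_\alpha>\rho$ and $I(e)<0$, so the mountain pass level $c=\inf_{\varphi}\max_{t\in[0,1]}I(\varphi(t))>0$ is well defined.

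It remains to get a critical point at level $c$. Compactness of bounded Palais--Smale sequences is routine: if $(u_n)$ is bounded and $I(u_n)\to c$, $I'(u_n)\to0$, then up to a subsequence $u_n\weakto u$ in $E_\alpha$ and $u_n\to u$ in every $L^q$ ($q\ge2$), the nonlocal term and its differential pass to the limit thanks to the displayed bound and \eqref{eq:estV2}, testing with $u_n-u$ gives $\|u_n-u\|_\alpha\to0$, hence $I'(u)=0$ and $I(u)=c>0$, so $u$ is a nontrivial solution. The only real issue is therefore the boundedness of $(PS)_c$ sequences. If $2<p\le4$ it is immediate, since $I(u_n)-\tfrac14 I'(u_n)[u_n]=\tfrac14\|u_n\|_\alpha^2+(\tfrac1p-\tfrac14)\|u_n\|_p^p\ge\tfrac14\|u_n\|_\alpha^2=c+o(1)+o(1)\|u_n\|_\alpha$. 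If $p>4$ this combination has the wrong sign and a $(PS)_c$ sequence need not be bounded; here I would run Jeanjean's monotonicity trick, working with $I_\lambda(u)=A(u)-\lambda B(u)$, $\lambda\in[1,2]$, where $A(u)=\tfrac12\|u\|_\alpha^2+\tfrac1p\|u\|_p^p+\tfrac14 V_2(u)\to+\infty$ as $\|u\|_\alpha\to\infty$ and $B(u)=\tfrac14 V_1(u)\ge0$, noting that the mountain pass geometry above is uniform for $\lambda\in[1,2]$ (because $I_\lambda\ge I_2$ on $\|u\|_\alpha=\rho$ and $I_\lambda(e)\le I_1(e)<0$). This produces, for a.e.\ $\lambda$, a critical point $u_\lambda$ of $I_\lambda$ at its mountain pass level $c_\lambda\in(0,c]$; letting $\lambda\downarrow1$ along a suitable sequence one has $c_{\lambda}\to c$ (by the monotonicity and the uniform geometry) and $I_1'(u_\lambda)=(\lambda-1)\tfrac14 V_1'(u_\lambda)\to0$ on bounded sets, so a uniform bound on $(u_\lambda)$ would yield a $(PS)_c$ sequence for $I=I_1$ and hence the desired solution. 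Obtaining that uniform bound --- combining the Nehari identity $I_\lambda'(u_\lambda)[u_\lambda]=0$ and the $V_1$-estimate with a Pohozaev-type identity adapted to the planar logarithmic kernel, where the confinement $|x|^\alpha$ and the range restriction on $\alpha$ enter --- is, I expect, the main technical obstacle of the proof.
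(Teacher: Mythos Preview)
Your setup, the mountain pass geometry, and the compactness of bounded Palais--Smale sequences are correct and essentially coincide with the paper. For $2<p\le4$ your direct estimate $I(u_n)-\tfrac14 I'(u_n)[u_n]=\tfrac14\|u_n\|_\alpha^2+(\tfrac1p-\tfrac14)\|u_n\|_p^p$ is valid and in fact simpler than what the paper does in that range.

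For $p>4$, however, your argument is incomplete: you set up Jeanjean's monotonicity trick but explicitly leave open the uniform bound on the family $(u_\lambda)$, and this is precisely the heart of the matter. The paper does \emph{not} use Jeanjean here; instead it follows the augmented-functional approach of Du--Weth / Hirata--Ikoma--Tanaka. One composes $I_\alpha$ with the dilation map $\eta(t,v)(x)=e^{2t/(p-4)}v(e^{-t}x)$ (for $p>4$) and runs a mountain pass for $\varphi=I_\alpha\circ\eta$ on $\R\times X$; this produces a Cerami sequence $(u_n)$ for $I_\alpha$ at level $c_\alpha$ which in addition satisfies $J(u_n)\to0$, where $J$ is the Pohozaev-type functional generated by this scaling. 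One then has three asymptotic identities: the energy level \eqref{fun}, the Nehari relation \eqref{ne}, and the Pohozaev relation \eqref{po}. Eliminating $V_0$ by the combination $\eqref{fun}-\tfrac{p-4}{4(p-2)}\cdot\eqref{po}$ yields
\[
\tfrac{p-3}{2(p-2)}\|\nabla u_n\|_2^2+\tfrac14\|u_n\|_2^2+\tfrac{2(p-2)-(p-4)\alpha}{8(p-2)}\!\int|x|^\alpha u_n^2+\tfrac{p-4}{32\pi(p-2)}\|u_n\|_2^4=c_\alpha+o(1),
\]
and it is exactly the hypothesis $\alpha\le\tfrac{2p-4}{p-4}$ that makes the coefficient of $\int|x|^\alpha u_n^2$ nonnegative, giving $H^1$-boundedness; boundedness of $\|u_n\|_*$ then follows from $\eqref{fun}-\tfrac14\eqref{ne}$. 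Your Jeanjean route could be made to work, but it would require establishing the Pohozaev identity for each critical point $u_\lambda$ (regularity is needed) and then performing this same linear combination --- the step you flagged as ``the main technical obstacle'' and did not carry out.
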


The second way that we propose to approach $(\mathcal P_-)$ is by assuming that $W'$ is sublinear near the origin.

For the sake of semplicity, we again consider a model problem as a perturbation of \eqref{eq:e0}
	\beq \label{eq:e2}\tag{$\mathcal{P}_{-}^\b$}
	\left	\{
	\begin{array}{l}
		-\Delta u +|u|^{\b-2}u - \phi u +|u|^{p-2}u=0\hbox{ in } \R^2,	\\
		\Delta \phi = u^2 \hbox{ in } \R^2.
	\end{array}
	\right.
	\eeq

\begin{theorem}\label{main3}
	Assume that $\b\in(1,2)$ and $p>2$. Then there exists a nontrivial radial  solution to \eqref{eq:e2}. 
\end{theorem}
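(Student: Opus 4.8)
The plan is to realize \eqref{eq:e2} as an Euler--Lagrange problem on the radial space $X:=\Hr\cap L^\beta(\RD)$, normed by $\|u\|:=\left(\|\nabla u\|_2^2+\|u\|_\beta^2\right)^{1/2}$. Since $\beta\in(1,2)$, the planar Gagliardo--Nirenberg inequality $\|u\|_2\le C\|\nabla u\|_2^{1-\beta/2}\|u\|_\beta^{\beta/2}$ gives $X\hookrightarrow\Hr$ and makes
\[
I(u)=\frac12\ird|\nabla u|^2\,dx-\frac14 V_0(u)+\frac1\beta\ird|u|^\beta\,dx+\frac1p\ird|u|^p\,dx
\]
meaningful. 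The first point is that $I\in C^1(X)$. Splitting $V_0=V_1-V_2$, the term $V_2$ is controlled by \eqref{eq:estV2}, while $V_1$ --- which is not finite on all of $\H$ --- is handled using the radial framework: from the decay estimate $|u(x)|\le C\|u\|_{H^1}|x|^{-1/2}$ together with $u\in L^\beta(\RD)$ one gets $\int_{|x|>1}|x|^\varepsilon u^2\,dx\le C\|u\|_{H^1}^{2-\beta}\|u\|_\beta^\beta$ for small $\varepsilon>0$, hence $\ird\log(2+|x|)u^2\,dx<\infty$, and, after using the Gagliardo--Nirenberg bound, $V_1(u)\le\frac1\pi\|u\|_2^2\ird\log(2+|x|)u^2\,dx\le C\|u\|^4$. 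By the principle of symmetric criticality, a critical point of $I$ on $X$ then solves \eqref{eq:e2}.

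Next I would verify a mountain pass geometry. Near the origin the estimate $V_1(u)\le C\|u\|^4$ gives, for $\|u\|=\rho\le1$,
\[
I(u)\ \ge\ \frac12\|\nabla u\|_2^2+\frac1\beta\|u\|_\beta^\beta-\frac C4\rho^4\ \ge\ \frac12\|\nabla u\|_2^2+\frac1\beta\|u\|_\beta^2-\frac C4\rho^4\ \ge\ \frac14\rho^2
\]
for $\rho$ small; the middle inequality uses $\|u\|_\beta^\beta\ge\|u\|_\beta^2$ (valid because $\beta<2$ and $\|u\|_\beta\le1$) and is exactly the step where the sublinearity of the potential "opens a well" at $0$ against the otherwise uncontrolled $V_1$. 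At the far end, for a fixed radial bump $w$ and the dilation $w_t(x):=w(x/t)$ one computes $V_0(w_t)=t^4V_0(w)+\tfrac{t^4\log t}{2\pi}\|w\|_2^4$, so $I(w_t)=-\tfrac{t^4\log t}{8\pi}\|w\|_2^4+O(t^4)\to-\infty$ while $\|w_t\|\to\infty$.

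The mountain pass theorem then yields a Cerami sequence $(u_n)$ at a level $c>0$. For $2<p\le4$ its boundedness is immediate: all coefficients of
\[
I(u_n)-\tfrac14 I'(u_n)u_n=\tfrac14\|\nabla u_n\|_2^2+\Big(\tfrac1\beta-\tfrac14\Big)\|u_n\|_\beta^\beta+\Big(\tfrac1p-\tfrac14\Big)\|u_n\|_p^p
\]
are nonnegative and $\tfrac1\beta-\tfrac14>0$, so $\|\nabla u_n\|_2$ and $\|u_n\|_\beta$, hence $\|u_n\|$, remain bounded. For $p>4$ this combination fails; here I would use Jeanjean's monotonicity trick on the family $I_\lambda=A-\lambda B$, $\lambda\in[\delta,1]$, with $B(u)=\tfrac14 V_1(u)\ge0$ and $A(u)=\tfrac12\|\nabla u\|_2^2+\tfrac14 V_2(u)+\tfrac1\beta\|u\|_\beta^\beta+\tfrac1p\|u\|_p^p$. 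For a.e.\ $\lambda$ close to $1$ this produces a critical point $u_\lambda$ of $I_\lambda$, and combining its Nehari identity, the planar Pohozaev identity (which for a solution reads $V_0(u)+\tfrac1{8\pi}\|u\|_2^4=\tfrac2\beta\|u\|_\beta^\beta+\tfrac2p\|u\|_p^p$) and $I_\lambda(u_\lambda)=c_\lambda\le C$, one eliminates $V_0$ and $\|u_\lambda\|_p^p$ and is left with $\tfrac14\|\nabla u_\lambda\|_2^2+\tfrac{p-\beta}{2\beta(p-2)}\|u_\lambda\|_\beta^\beta\le C$; letting $\lambda\to1$ recovers a bounded Cerami sequence for $I$.

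Finally one passes to the limit. A bounded radial sequence has, up to a subsequence, $u_n\weakto u$ in $X$, $u_n\to u$ in $L^q(\RD)$ for every $q\in(2,\infty)$ and a.e.; combining the radial decay with the uniform $L^\beta$ bound also forces $\|u_n-u\|_2\to0$, and then, through the estimates above, $V_2(u_n)\to V_2(u)$ and $V_1(u_n)\to V_1(u)$, so that $I'(u_n)\to0$ passes to $I'(u)=0$. To see $u\neq0$ one uses $c>0$: were $u_n\weakto0$, then $\|u_n\|_2$, $\|u_n\|_p$ and $V_0(u_n)$ would all tend to $0$, so that $I'(u_n)u_n=o(1)$ would force $\|\nabla u_n\|_2\to0$ and $\|u_n\|_\beta\to0$, giving $I(u_n)\to0\neq c$. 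Hence $u$ is a nontrivial radial solution of \eqref{eq:e2}. I expect the main obstacle to be the case $p>4$: Cerami (and Palais--Smale) sequences at the mountain pass level of $I$ need not be bounded, which is what forces the monotonicity trick together with the Pohozaev identity; a subtler point, present for all $p>2$, is the opening estimate $V_1(u)\le C\|u\|^4$ --- the claim that a merely sublinear term suffices to restore the mountain pass geometry --- which works precisely because we have confined ourselves to the radial class, where Strauss--type decay is available.
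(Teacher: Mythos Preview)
Your variational setting, the mountain--pass geometry, and the convergence/non--triviality steps are essentially those of the paper: same radial space $\Har$ with norm built from $\|\nabla\cdot\|_2$ and $\|\cdot\|_\beta$, same Strauss--based estimate $\int\log(2+|x|)u^2\le C\|u\|^2$ giving $V_1(u)\le C\|u\|^4$, and the same dilation $u(\cdot/t)$ to reach negative energy.

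The substantive difference is how you obtain boundedness of the Cerami sequence, and here your argument splits into two regimes while the paper's does not. For $2<p\le 4$ your combination $I(u_n)-\tfrac14 I'(u_n)u_n$ is correct and in fact simpler than what the paper does. For $p>4$, however, there is a gap. You put Jeanjean's parameter in front of $V_1$, so the critical points $u_\lambda$ solve an equation whose nonlocal part is $\tfrac14 V_2-\tfrac{\lambda}{4}V_1$; this is \emph{not} $-\tfrac14 V_0$ for $\lambda\neq 1$, and the clean scaling $V_0(u_t)=t^4 V_0(u)+\tfrac{t^4\log t}{2\pi}\|u\|_2^4$ no longer applies. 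The Pohozaev identity you quote is the one for the original problem, while what you need is the identity for the $\lambda$--problem, which involves the separate (and messy) scalings of $V_1$ and $V_2$. Your claimed elimination and the coefficient $\tfrac14$ in front of $\|\nabla u_\lambda\|_2^2$ are therefore not justified as written. The gap is reparable, but not by the line you give.

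The paper avoids this dichotomy altogether. Following Hirata--Ikoma--Tanaka, it produces a Cerami sequence for the \emph{original} functional $I_\b$ that in addition satisfies the Pohozaev constraint
\[
P(u_n)=-\tfrac1{8\pi}\|u_n\|_2^4-V_0(u_n)+\tfrac2\beta\|u_n\|_\beta^\beta+\tfrac2p\|u_n\|_p^p=o(1).
\]
Then the single combination $I_\b(u_n)-\tfrac14 P(u_n)$ eliminates $V_0$ and yields
\[
\tfrac12\|\nabla u_n\|_2^2+\tfrac1{32\pi}\|u_n\|_2^4+\tfrac1{2\beta}\|u_n\|_\beta^\beta+\tfrac1{2p}\|u_n\|_p^p=c_\b+o(1),
\]
which gives boundedness for every $p>2$ at once, with no case distinction and no $\lambda$--dependent Pohozaev identity to derive.
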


 It is really interesting to observe how remarkable is the impact that the dimension has on the problem $(\mathcal P_-)$  under the assumption $W2$. Indeed we have the following nonexistence result, which comes to the same conclusion of \cite[Proposition 1.2]{M} where the nonautonomous case was treated
 
 \begin{theorem}\label{nonex}
 	Assume that $W$ satisfies $W1, W2$ and $W'(0)=0$. If  the couple $(u,\phi)\in H^1(\RT)\times \D$ solves $(\mathcal P_-)$ in $\RT$ and $W(u)\in L^1(\RT)$ then $(u,\phi)=(0,0)$.
 \end{theorem}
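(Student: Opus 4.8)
The argument I would use is modelled on the three--dimensional nonexistence statement \cite[Proposition 1.2]{M} and rests on a Pohozaev--type identity for the system. Write $\phi=\phi_u$ for the unique solution in $\D$ of $\Delta\phi=u^2$; testing this equation with $\phi$ gives at once
\begin{equation}\label{eq:nonex-plan-1}
\irt|\nabla\phi|^2\,dx=-\irt\phi\, u^2\,dx\ \ge\ 0 .
\end{equation}
The core of the proof is then the identity
\begin{equation}\label{eq:nonex-plan-2}
\frac12\irt|\nabla u|^2\,dx-\frac54\irt\phi\, u^2\,dx+3\irt W(u)\,dx=0 .
\end{equation}

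To obtain \eqref{eq:nonex-plan-2} I would multiply the first equation of $(\mathcal P_-)$ by $x\cdot\nabla u$ and integrate over $\RT$ --- equivalently, I would use that $u$ is a critical point of $J(v):=\frac12\irt|\nabla v|^2\,dx-\frac14\irt\phi_v v^2\,dx+\irt W(v)\,dx$, whence $\frac{d}{dt}J(u_t)\big|_{t=1}=0$ along the dilations $u_t(x)=u(x/t)$. Since $\irt|\nabla u_t|^2\,dx=t\irt|\nabla u|^2\,dx$, $\irt\phi_{u_t}u_t^2\,dx=t^5\irt\phi\, u^2\,dx$ and $\irt W(u_t)\,dx=t^3\irt W(u)\,dx$, differentiating at $t=1$ yields precisely \eqref{eq:nonex-plan-2}. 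The only genuinely delicate step --- and the one I expect to be the main obstacle --- is making this computation rigorous: an element of $\HT$ need not decay a priori, so $x\cdot\nabla u$ is not obviously an admissible test function. One therefore first upgrades the regularity and the decay of $(u,\phi)$ by elliptic estimates and a bootstrap, then carries out the identity against a cut-off $\chi(x/R)$ and lets $R\to\infty$; the boundary contribution produced by the nonlinear term is dominated by $\int_{R\le|x|\le 2R}W(u)\,dx\to 0$, which is exactly where the hypothesis $W(u)\in L^1(\RT)$ is used, while $W2$ and $W'(0)=0$ (the latter being automatic once $W\ge 0=W(0)$ and $W\in C^1$) govern the behaviour near the origin, and the gradient and nonlocal terms are handled by the usual cut-off estimates.

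Once \eqref{eq:nonex-plan-2} is available the conclusion is immediate, and this is the favourable feature of the minus sign. Indeed, by \eqref{eq:nonex-plan-1} the identity rewrites as
\begin{equation*}
\frac12\irt|\nabla u|^2\,dx+\frac54\irt|\nabla\phi|^2\,dx+3\irt W(u)\,dx=0 ,
\end{equation*}
and all three terms are nonnegative, the last one by $W2$. Hence each of them vanishes; in particular $\irt|\nabla u|^2\,dx=0$, so $u$ is constant, and as $u\in\HT\subset L^2(\RT)$ this forces $u\equiv 0$, whence also $\phi=\phi_u\equiv 0$. Therefore $(u,\phi)=(0,0)$.
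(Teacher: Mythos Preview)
Your argument is correct and matches the paper's approach: both rest on the Pohozaev identity (which the paper simply quotes from \cite{DM2} rather than deriving) together with the sign of the nonlocal term, so that every summand is nonnegative and hence must vanish. The only cosmetic differences are that you obtain the sign of $\irt\phi\,u^2\,dx$ by testing $\Delta\phi=u^2$ against $\phi$, whereas the paper reads it off from the explicit Newtonian representation of $\phi$, and that you sketch the regularity/cut-off justification of the identity while the paper defers this entirely to \cite{DM2}.
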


\begin{remark}
	As already pointed out, the problem $(\mathcal P_-)$ is completely new and there is a lot to study about it. For example, we are quite confident that by some work it is possible to adapt known techniques to prove multiplicity results for both $\eqref{eq:e1}$ and $\eqref{eq:e2}$ . On the other hand, we also believe that $(\mathcal P_-)$ could be successfully treated in correspondence of more general nonnegative $W$, besides the possibility of studying it for sign-changing $W$ or, possibly, in the harder case of nonpositive $W$.\\
	All these issues are not objects of this paper (whose main purpose is to emphasize the surprising conflict between Theorem \ref{main2} and \ref{main3} on one hand and Theorem \ref{nonex} on the other). We pospone to future papers a deeper study on open problems concerning $(\mathcal P_-)$.
\end{remark}

The paper is organized as follows. 

Section \ref{sec:P_+} is devoted to study the problem $(\mathcal P_+)$ and prove Theorem \ref{main} by means of the mountain pass theorem.

In Section \ref{sec:P_-} we consider problem $(\mathcal P_-)$. It is split in three subsections: the first is aimed to introduce the variational setting and develop the arguments for proving Theorem \ref{main2},  the second is, analagously, completely devoted to the proof of Theorem  \ref{main3} and, finally, in the third, we provide a proof for the nonexistence result  stated in Theorem \ref{nonex}.
 
 \section {The problem $\mathcal P_+$}\label{sec:P_+}
 
 In this section we are going to prove the Theorem \ref{main}, so that in what follows, we are assuming $W1, W2$ and either $W3$ or $W3'$.\\
 
 We will use the following result which is a slightly modified version of \cite[Theorem 1.1]{J}.
 
\begin{theorem}\label{JJ}
	Let $(X,\|\cdot\|)$ be a Banach space, and $J\subset \R_+$ an interval. Consider the family of $C^1$ functionals on $X$
		\begin{equation*}
			I_\l(u)=A(u)-\l B_(u),\quad \l\in J,
		\end{equation*}
	with $B$ nonnegative and either $A(u)\to + \infty$ or $B(u)\to + \infty$ as $\|u\|\to\infty$ and such that $I_\l(0)=0.$
	
	For any $\l\in J$ we set
		\begin{equation}
			\G_\l:=\{\g\in C([0,1],X)\mid \g(0)=0, I_\l(\g(1))<0\}.
		\end{equation} 
	If for every $\l\in J$ the set $\G_\l$ is nonempty and 
		\begin{equation}
			c_\l:=\inf_{\g\in\G_\l}\max_{t\in[0,1]}I_\l(\g(t))>0,
		\end{equation}
	then for almost every $\l\in J$ there is a sequence $(v_n)_n$ in $X$ such that 
	\begin{itemize}
		\item [(i)] $(v_n)_n$ is bounded;
		\item [(ii)] $I_\l(v_n)\to c_\l$;
		\item [(iii)] $(I_\l)'(v_n)\to 0$ in the dual of $X$.
	\end{itemize}
\end{theorem}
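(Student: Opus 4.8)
The plan is to run Jeanjean's monotonicity trick. The starting point is that, since $B\ge 0$, for $\lambda<\lambda'$ in $J$ one has $I_{\lambda'}(u)\le I_\lambda(u)$ for every $u\in X$; hence $\gamma\in\Gamma_\lambda$ forces $I_{\lambda'}(\gamma(1))\le I_\lambda(\gamma(1))<0$, i.e.\ $\Gamma_\lambda\subseteq\Gamma_{\lambda'}$, and moreover $\max_t I_{\lambda'}(\gamma(t))\le\max_t I_\lambda(\gamma(t))$ for every path. Consequently $\lambda\mapsto c_\lambda$ is non-increasing on $J$, and by Lebesgue's theorem it is differentiable at almost every $\lambda\in J$. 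It therefore suffices to produce a bounded Palais--Smale sequence at level $c_\lambda$ for each $\lambda$ in the (full-measure) set of interior points of $J$ where $c_\lambda'$ exists; fix one such $\lambda$.

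Next I would localise the near-optimal paths. Differentiability of $c$ at $\lambda$ gives $M>0$ and $\delta>0$ (with $[\lambda-\delta,\lambda]\subseteq J$) such that $0\le c_\mu-c_\lambda\le M(\lambda-\mu)$ for $\mu\in[\lambda-\delta,\lambda]$. Pick $\mu_n\uparrow\lambda$ in this interval and, for each $n$, a path $\gamma_n\in\Gamma_{\mu_n}$ with $\max_t I_{\mu_n}(\gamma_n(t))\le c_{\mu_n}+(\lambda-\mu_n)$. Since $\mu_n<\lambda$ we get $\gamma_n\in\Gamma_\lambda$ and $c_\lambda\le\max_t I_\lambda(\gamma_n(t))\le\max_t I_{\mu_n}(\gamma_n(t))\to c_\lambda$. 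The crucial estimate: if $t$ satisfies $I_\lambda(\gamma_n(t))\ge c_\lambda-(\lambda-\mu_n)$, then from $I_{\mu_n}=I_\lambda+(\lambda-\mu_n)B$ one gets $(\lambda-\mu_n)B(\gamma_n(t))=I_{\mu_n}(\gamma_n(t))-I_\lambda(\gamma_n(t))\le(c_{\mu_n}-c_\lambda)+2(\lambda-\mu_n)\le(M+2)(\lambda-\mu_n)$, hence $B(\gamma_n(t))\le M+2$ and then $A(\gamma_n(t))=I_\lambda(\gamma_n(t))+\lambda B(\gamma_n(t))$ is bounded too; by the hypothesis that $A(u)\to+\infty$ or $B(u)\to+\infty$ as $\|u\|\to\infty$, all such points lie in a fixed ball $B_{R_0}$ with $R_0$ independent of $n$.

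Finally I would extract the Palais--Smale sequence by a deformation argument. Since $c_\lambda>0$ while $I_\lambda(\gamma_n(0))=I_\lambda(0)=0$ and $I_\lambda(\gamma_n(1))<0$, the near-maximal points of $\gamma_n$ stay away from the endpoints. Suppose, for contradiction, that there is no bounded Palais--Smale sequence at level $c_\lambda$; then there are $\alpha,\rho>0$ with $\|I_\lambda'(u)\|\ge\alpha$ whenever $\|u\|\le R_0+1$ and $|I_\lambda(u)-c_\lambda|\le\rho$. Applying Ekeland's variational principle on the complete metric space $(\Gamma_\lambda,d_\infty)$, $d_\infty(\gamma,\eta)=\max_t\|\gamma(t)-\eta(t)\|$, to the continuous functional $\Phi(\gamma)=\max_t I_\lambda(\gamma(t))$, I may also assume each $\gamma_n$ is $\tfrac1n$-almost minimal for $\Phi$ in the Ekeland sense; a pseudo-gradient flow for $I_\lambda$, cut off so that it acts only on the (compact, contained in $B_{R_0}$) set of near-maximal points of $\gamma_n$ and vanishes near $\gamma_n(0)$ and $\gamma_n(1)$, then deforms $\gamma_n$ into a path still in $\Gamma_\lambda$ with strictly smaller maximum, contradicting $\Phi(\gamma_n)\to c_\lambda=\inf_{\Gamma_\lambda}\Phi$. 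Hence a bounded Palais--Smale sequence at $c_\lambda$ exists, which is precisely (i)--(iii).

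The main obstacle is this last step: legitimising the deformation, i.e.\ checking that the pseudo-gradient field can be localised to $B_{R_0+1}$ and to energies near $c_\lambda$ without disturbing the endpoints, and dovetailing it with Ekeland's principle on the path space so that the contradiction genuinely yields $I_\lambda'(v_n)\to0$ rather than merely $\liminf\|I_\lambda'(v_n)\|<\alpha$. The bookkeeping with the parameter $\lambda-\mu_n$ --- which simultaneously measures the closeness of $\mu_n$ to $\lambda$, bounds $c_{\mu_n}-c_\lambda$, and fixes the tolerance in the choice of $\gamma_n$ --- must also be arranged consistently, but this is routine once the structure above is in place.
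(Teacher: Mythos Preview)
The paper does not give its own proof of this theorem; it simply quotes it as ``a slightly modified version of \cite[Theorem 1.1]{J}'' and uses it as a black box. Your proposal is a correct outline of precisely Jeanjean's monotonicity-trick argument from \cite{J} (differentiability a.e.\ of $\lambda\mapsto c_\lambda$, localisation of near-optimal paths via the difference quotient, and an Ekeland/deformation step on the path space), so it matches the approach behind the cited result.
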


In our case $X=E$ and
	\begin{align*}
		A(u)&:= \frac 12\ird |\n u|^2\, dx + \frac 1 4 V_1(u) + \ird W(u)\, dx\\
		B(u)&:= \frac 14 V_2(u).
	\end{align*}
 
 We show that the functional possesses the mountain pass geometry
 
 	\begin{proposition}
	 		There exist $\rho>0$, $\g > 0$ and $\bar u\in E$ such that, said $B_{\rho}$ the ball of radius $\rho$ in $E$,
 	 			\begin{itemize}
	 				\item $I_{|B_{\rho}}\ge 0$ and $I_{|\partial B_{\rho}}\ge \g$,
	 				\item $\|\bar u\|>\rho$ and $I(\bar u)<0$.
	 			\end{itemize}
 	\end{proposition}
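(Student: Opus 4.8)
The plan is to verify the two standard mountain-pass conditions for $I$ on $E=\H\cap L^2(\RD,V\,dx)$, separating the analysis near the origin from the behaviour along a ray to infinity. For the local part, I would start from the decomposition $I(u)=\frac12\|\n u\|_2^2+\frac14 V_1(u)-\frac14 V_2(u)+\ird W(u)\,dx$. The potentially dangerous term is $-\frac14 V_2(u)$, which is controlled by \eqref{eq:estV22}: choosing $\eps$ small (say $\eps=\frac14$) gives $-\frac14 V_2(u)\ge -\frac18\|\n u\|_2^2 - C_\eps\|u\|_2^6$. Since $V_1\ge 0$ and $W\ge 0$ by $W2$, this already yields $I(u)\ge \frac38\|\n u\|_2^2 - C_\eps\|u\|_2^6$. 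On a ball of radius $\rho$ in $E$ we have $\|u\|_2^6\le C\|u\|^6=C\rho^6$, so $I(u)\ge \frac38\|\n u\|_2^2 - C\rho^6$; this is not quite enough by itself because $\|\n u\|_2$ can be small while $\|u\|_{2,V}$ is not, so I would instead also keep a share of the weighted norm. Concretely: from $V_1(u)\ge 0$, $W\ge 0$, and the refined estimate $V_2(u)\le C\|u\|_{8/3}^4$, using the compact embedding $E\hookrightarrow L^{8/3}$ one gets $V_2(u)\le C\|u\|^4$, hence $I(u)\ge \frac12\|\n u\|_2^2+\ird W(u)\,dx-\frac{C}{4}\|u\|^4$. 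Combined with $\|\n u\|_2^2 \le \|u\|^2$ this is still delicate, so the cleanest route is: show there is $\rho_0$ such that for $\|u\|\le\rho_0$, $V_2(u)\le \frac12\big(\|\n u\|_2^2 + \text{(something nonnegative)}\big)$ — but since we have no positive definite quadratic lower bound other than $\|\n u\|_2^2$ for free, I would actually use the full strength of $V_1$: note $V_1(u)\ge \frac1{2\pi}\log 2\,\|u\|_2^4$ (since $\log(2+|x-y|)\ge\log 2$), which together with $\frac12\|\n u\|_2^2$ gives a genuine coercive quadratic-type quantity. Then for $\|u\|$ small, $\frac14 V_1(u)-\frac14 V_2(u)\ge 0$ and in fact $\ge \gamma$ on $\partial B_\rho$ for suitable $\rho,\gamma>0$, using that $V_2(u)=o(V_1(u)+\|\n u\|_2^2)$ near the origin by \eqref{eq:estV2}. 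I would phrase this carefully to land the first bullet.

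For the second bullet, I would fix a nonzero $u_0\in C_c^\infty(\RD)\subset E$ and study $I(tu_0)$ as $t\to+\infty$. By the scaling of the Coulomb-type term, $V_0(tu_0)=t^4 V_0(u_0)+ \frac1{2\pi}t^4\log(t)\,\big(\ird u_0^2\big)^2$ — actually more precisely $V_0(tu_0) = t^4\big(V_0(u_0) + \frac{\log t}{2\pi}\|u_0\|_2^4\big)$, wait: $\log(t^2|x-y|\cdot\frac1{t^2})$... let me just note $V_0(tu_0)=t^4 V_0(u_0)$ does NOT hold because the logarithm is not homogeneous; rather $\frac1{2\pi}\ird\ird\log|x-y|\,t^2u_0^2(x)t^2u_0^2(y) = t^4 V_0(u_0)$, which IS exactly homogeneous of degree $4$ since the mass $u_0^2$ scales but the argument of $\log$ does not. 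So $V_0(tu_0)=t^4 V_0(u_0)$, a fixed (possibly negative, possibly positive) constant times $t^4$. Meanwhile $\frac12\|\n(tu_0)\|_2^2 = \frac{t^2}{2}\|\n u_0\|_2^2$ grows like $t^2$, and $\ird W(tu_0)\,dx$ is controlled above: under $W3$ we have $W(s)\le C_1 s^2 + C_2|s|^p$ with $p\in(2,3)$ after integrating $W3$ (using $W(0)=0$), so $\ird W(tu_0)\le C t^2 + C t^p$ with $p<3<4$; under $W3'$ directly $W(s)\le C_1 s^2+C_2|s|^p$ with $p\in(2,4)$. In either case $\ird W(tu_0)\,dx = O(t^p)$ with $p<4$. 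The key point is that the term $\frac14 t^4 V_0(u_0)$ dominates as $t\to\infty$ \emph{provided} $V_0(u_0)<0$; such $u_0$ exists — one chooses $u_0$ supported where $|x-y|<1$ so that $\log|x-y|<0$ throughout, e.g. $u_0$ supported in a ball of radius $\frac12$, forcing $V_0(u_0)<0$. Then $I(tu_0) = \frac14 t^4 V_0(u_0) + O(t^p) + O(t^2) \to -\infty$, and picking $t$ large gives $\bar u = tu_0$ with $I(\bar u)<0$ and $\|\bar u\|>\rho$.

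I expect the main obstacle to be the first bullet, specifically extracting a strictly positive lower bound $\gamma$ on $\partial B_\rho$ uniformly in all directions: unlike the $H^1$ setting where one has a coercive quadratic form from $\|\n u\|_2^2$ alone, here on $E$ the natural norm mixes gradient and weighted $L^2$, and the only manifestly positive-definite quadratic-order quantities are $\|\n u\|_2^2$ and $V_1(u)\gtrsim \|u\|_2^4$ (which is quartic, not quadratic, in $\|u\|_2$). One must therefore argue that on the sphere $\|u\|=\rho$ either $\|\n u\|_2$ is bounded below (and then $\frac12\|\n u\|_2^2$ beats $\frac14 V_2(u)\le \eps\|\n u\|_2^2 + C_\eps\|u\|_2^6$ for small $\rho$), or $\|u\|_2$ is bounded below (so $\|u\|_{2,V}^2\approx\rho^2$ forces, via the logarithmic weight, $\|u\|_2$ comparable to $\rho$ up to logarithmic losses, making $V_1(u)\gtrsim\rho^4$ beat $V_2(u)\lesssim\rho^4$ with a better constant for small $\rho$) — a dichotomy argument. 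I would also remark that $W3$ versus $W3'$ does not affect this step since $W\ge0$ is all that is used here; the distinction between the two hypotheses matters only for the Palais–Smale analysis carried out later, not for the geometry, which is exactly why the authors note the difficulty lies in "proving the geometry of mountain pass" when trying to reach growth exponent four.
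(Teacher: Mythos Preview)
For the first bullet your route coincides with the paper's: drop $W\ge0$, use $V_1(u)\ge\frac{\log 2}{2\pi}\|u\|_2^4$ together with \eqref{eq:estV22} to arrive at
\[
I(u)\ \ge\ \Big(\tfrac12-\tfrac\eps4\Big)\|\n u\|_2^2+\tfrac{\log 2}{8\pi}\|u\|_2^4-\tfrac{C_\eps}4\|u\|_2^6,
\]
which is exactly the inequality the paper writes before declaring that it ``implies the first geometric property.'' Your worry about extracting a uniform $\gamma>0$ on $\partial B_\rho\subset E$ is legitimate, but the dichotomy you sketch does not close it: its second horn rests on the claim that $\|u\|_{2,V}\approx\rho$ forces $\|u\|_2\gtrsim\rho$, and this is false. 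For a fixed bump $\chi$, the dilations $u_R(x)=(R\sqrt{\log R})^{-1}\chi(x/R)$ satisfy $\|u_R\|\to\|\chi\|_2>0$ in $E$ while $\|\n u_R\|_2\to0$ and $\|u_R\|_2\to0$, and one checks $I(u_R)\to 0$. What the displayed inequality \emph{does} yield cleanly---since it holds for every $u\in E$---is the positivity of the mountain-pass level: any admissible path has $I(\g(1))<0$, which forces $\|\g(1)\|_2$ above a fixed threshold, so the path crosses the $L^2$-sphere $\{\|u\|_2=\d\}$ for small $\d$, where $I\ge\tfrac{\log 2}{8\pi}\d^4-\tfrac{C_\eps}{4}\d^6>0$; that is all Theorem~\ref{JJ} requires.

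For the second bullet you and the paper take different routes. The paper first rescales spatially, setting $u_t(\cdot)=t^2u(t\,\cdot)$ and computing $V_0(u_t)=t^4V_0(u)-\tfrac{t^4\log t}{2\pi}\|u\|_2^4$, which becomes negative for $t$ large; it then takes $\bar u=h\,u_{\bar t}$ with $h$ large, using the growth bound from $W3$ or $W3'$ to get $\int W(hu_{\bar t})=O(h^p)$ with $p<4$, which loses to $\tfrac{h^4}{4}V_0(u_{\bar t})<0$. Your alternative---choose $u_0\in C_c^\infty$ supported in a ball of radius $<\tfrac12$ so that $\log|x-y|<0$ throughout the support and hence $V_0(u_0)<0$ directly, then scale only in amplitude---is correct and a bit more elementary, bypassing the spatial rescaling step entirely. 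Both arguments ultimately rest on the same $t^4$ versus $t^p$ comparison, and your observation that integrating $W3$ gives the same growth bound on $W$ as $W3'$ is right.
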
 
 
 	\begin{proof}
 		Take $u\in E$. Then, for a suitable choice of a small $\eps$ in \eqref{eq:estV22} 
 			\begin{align*}
	 			I(u) &\ge\frac 12 \|\n u\|_2^2 +  \frac 1 4V_1(u) - \frac 1 4 V_2(u)\\
	 				  &\ge \left(\frac 1 2- \frac \eps 4\right)\| \n u\|_2^2 + \frac {\log 2}{8\pi}\|u\|_2^4-\frac {C_\eps}4 \|u\|_2^6
 			\end{align*}
 		which implies the first geometric property.\\
 		As regards the second, observe that if we take $u\in E$, denoting for every $t>0$ $u_t(\cdot)= t^2 u(t\cdot)$, since
 			$$V_0(u_t)= \frac {t^4}{2\pi} \ird\ird \log(|x-y|) u^2(x) u^2(y)\, dx\,dy - \frac{t^4 \log t}{2\pi} \left(\ird  u^2(x)\, dx\right)^2,$$ 
 		we deduce that there exists $\bar t>0$ such that $V_0(u_{\bar t})<0$.\\
 		Now, by $W3$ or, respectively $W3'$, for a sufficiently large $h>0$ we have 
 			\begin{equation*}
	 			I(h u_{\bar t} )\le \frac 12 h^2\|\n u_{\bar t}\|_2^2 + \frac {h^4}4 V_0(u_{\bar t}) + C_1 h^2 \|u_{\bar t}\|_2^2 + C_2 h^p\|u_{\bar t}\|_p^p <0
 			\end{equation*}
 		with $\|h u_{\bar t}\|>\rho$.
 	\end{proof}
 
 	Defining $J=[1-\d,1]$ for $\d>0$, by an easy continuity argument we get the following
 		\begin{corollary}
	 		There exist $\rho>0$, $\g > 0$, $\bar u\in E$ and $\d>0$ such that the following two properties
				\begin{itemize}
					\item ${I_{\l}}_{|B_{\rho}}\ge 0$ and ${I_{\l}}_{|\partial B_{\rho}}\ge \g$,
					\item $\|\bar u\|>\rho$ and ${I_{\l}}(\bar u)<0$,
				\end{itemize}
			hold uniformly for $\l\in[1-\d,1].$
 		\end{corollary}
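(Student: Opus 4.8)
The plan is to deduce the Corollary entirely from the Proposition, exploiting the special structure $I_\l=A-\l B$ together with the fact that $B=\frac14 V_2\ge 0$ (recall $V_2(u)\ge 0$ since the integrand $\log(1+2/|x-y|)$ is positive).

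First I would treat the lower estimate near the origin. For every $\l\in(0,1]$ and every $u\in E$ one has $\l B(u)\le B(u)$, hence $I_\l(u)=A(u)-\l B(u)\ge A(u)-B(u)=I(u)$ pointwise. Therefore the first item of the Proposition transfers verbatim: with the \emph{same} $\rho$ and $\g$, ${I_\l}_{|B_\rho}\ge I_{|B_\rho}\ge 0$ and ${I_\l}_{|\partial B_\rho}\ge I_{|\partial B_\rho}\ge\g$. Note this part holds for \emph{all} $\l\in(0,1]$, so it places no restriction on $\d$.

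Next I would handle the descending endpoint. Let $\bar u=h u_{\bar t}$ be the point constructed in the proof of the Proposition, so that $\|\bar u\|>\rho$ (a condition not involving $\l$) and $I(\bar u)<0$. The map $\l\mapsto I_\l(\bar u)=A(\bar u)-\l B(\bar u)$ is affine, in particular continuous, in $\l$, and equals $I(\bar u)<0$ at $\l=1$; hence there is $\d\in(0,1)$ with $I_\l(\bar u)<0$ for all $\l\in[1-\d,1]$. (If $B(\bar u)=0$ this is immediate for every $\l$; if $B(\bar u)>0$ one may take $\d=|I(\bar u)|/B(\bar u)$ explicitly.) Combining the two steps, with this $\d$ and the $\rho,\g,\bar u$ coming from the Proposition, both displayed properties hold uniformly for $\l\in[1-\d,1]$.

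There is essentially no obstacle here — this is the promised ``easy continuity argument''. The only point worth isolating is the sign condition $B\ge 0$, precisely the monotonicity built into the hypotheses of Theorem \ref{JJ}, which is what makes the lower bound improve (rather than deteriorate) as $\l$ decreases from $1$, so that only the behaviour at the single point $\bar u$ needs the genuine (affine) continuity in $\l$.
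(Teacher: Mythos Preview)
Your argument is correct and is exactly the ``easy continuity argument'' the paper alludes to without spelling out: the monotonicity $I_\l\ge I$ for $\l\le 1$ (from $B\ge 0$) carries the local lower bounds unchanged, while affine continuity in $\l$ at the fixed point $\bar u$ gives the $\d$. The only slip is the explicit value $\d=|I(\bar u)|/B(\bar u)$, which yields $I_{1-\d}(\bar u)=0$ rather than $<0$ at the left endpoint; any strictly smaller $\d$ works.
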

 
 By this Corollary, the sets $\G_\l$ are nonempty and the mountain pass levels $c_\l$ are well defined and uniformly bounded from below by a positive constant.
 
 Now, exploiting Theorem \ref{JJ}, consider a sequence $\l_n$ in $J$ such that $\l_n\nearrow 1$ and for which there exists a bounded Palais-Smale sequence for $I_{\l_n}$ at the level $c_{\l_n}$.  Then we have the following result
 
 	\begin{proposition}
	 		There exists a sequence $(u_n)_n$  in $E$ such that
	 			\begin{itemize}
	 				\item $I_{\l_n}(u_n)=c_{\l_n}$,
	 				\item $I'_{\l_n}(u_n)=0$,
	 				\item $P_{\l_n}(u_n)=0$,
	 			\end{itemize} 
 			where $P_{\l_n}:E\to \R$ is the functional
 				\begin{equation}\label{eq:poho}
 				P_{\l_n}(v):=V_1(v)-\l_n V_2(v)+\frac 1{8\pi}\left(\ird v^2\,dx\right)^2+ 2 \ird W(v)\,dx .
 				\end{equation} 
 	\end{proposition}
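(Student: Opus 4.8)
The plan is to obtain the sequence $(u_n)_n$ by combining the bounded Palais--Smale sequence supplied by Theorem \ref{JJ} with a concentration--compactness / Lions-type argument, and then to obtain the Pohozaev-type identity $P_{\l_n}(u_n)=0$ by a scaling argument. First I would fix $n$ and let $(v_k)_k$ be a bounded sequence in $E$ with $I_{\l_n}(v_k)\to c_{\l_n}$ and $I'_{\l_n}(v_k)\to 0$. Since $E$ embeds compactly into every $L^q(\RD)$ with $q\ge 2$ (the compact embedding recalled in the introduction, coming from the coercivity of $V$), up to a subsequence $v_k\weakto u_n$ in $E$ and $v_k\to u_n$ strongly in each $L^q$. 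The compactness of these embeddings is exactly what is missing in the whole-space problem and what the weighted space buys us; it is the reason one works with $E$ rather than $\H$. Using the strong $L^q$-convergence one checks that $V_2(v_k)\to V_2(u_n)$ and $\ird W(v_k)\to \ird W(u_n)$ (here $W3$ or $W3'$ give the growth control needed to pass to the limit in the nonlinear term, and \eqref{eq:estV2} together with the $L^{8/3}$-convergence handles $V_2$), while $V_1$ and $\|\n\cdot\|_2^2$ are weakly lower semicontinuous.

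The core step is to upgrade weak convergence to strong convergence in $E$. Testing $I'_{\l_n}(v_k)\to 0$ with $v_k-u_n$ and using that $\langle I'_{\l_n}(u_n),v_k-u_n\rangle\to 0$, all the lower-order terms ($V_1'$, $V_2'$, $W'(v_k)$ paired against $v_k-u_n$) vanish in the limit by the strong $L^q$-convergence, so that $\|v_k-u_n\|^2\to 0$; hence $v_k\to u_n$ strongly in $E$. Consequently $I_{\l_n}(u_n)=\lim I_{\l_n}(v_k)=c_{\l_n}$ and $I'_{\l_n}(u_n)=\lim I'_{\l_n}(v_k)=0$, which gives the first two bullets. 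In particular $u_n\ne 0$, since $c_{\l_n}>0$.

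It remains to establish the Pohozaev-type relation $P_{\l_n}(u_n)=0$. For this I would exploit the scaling $u_t(\cdot)=t^2u(t\cdot)$ already used in the mountain pass geometry. A direct computation gives $\|\n u_t\|_2^2=t^4\|\n u\|_2^2$, $\ird u_t^2\,dx=t^2\ird u^2\,dx$, and, from the identity for $V_0(u_t)$ displayed in the previous proposition together with $V_0=V_1-V_2$, explicit expressions for $V_1(u_t)$ and $V_2(u_t)$; also $\ird W(u_t)\,dx=t^2\int_{\RD} (\text{rescaled integrand})$. Writing $g(t):=I_{\l_n}(u_{n,t})$, one has $g\in C^1$ near $t=1$ and $g(1)=c_{\l_n}$; since $u_{n}$ is a critical point of $I_{\l_n}$ one expects $t=1$ to be a critical point of $g$ as well, and computing $g'(1)=0$ yields exactly a linear combination of the terms in \eqref{eq:poho}. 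The honest way to see $g'(1)=0$ is that $\frac{d}{dt}\big|_{t=1}u_{n,t}$ is a legitimate test direction in $E$ (it lies in $E$ because $V$ is only logarithmic, so the extra polynomial weight from differentiating the dilation is absorbed), whence $g'(1)=\langle I'_{\l_n}(u_n),\,\tfrac{d}{dt}\big|_{t=1}u_{n,t}\rangle=0$. Matching $g'(1)$ with the coefficients in front of $V_1$, $V_2$, $\big(\ird v^2\big)^2$ and $\ird W$ gives precisely $P_{\l_n}(u_n)=0$ (note that the $\|\n u\|_2^2$ term, scaling like $t^4$, contributes $4\cdot\tfrac12\|\n u_n\|_2^2$ which is cancelled using the equation $I'_{\l_n}(u_n)=0$ tested against $u_n$, leaving the stated form).

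The main obstacle I anticipate is the strong-convergence step, specifically making sure that the quadratic part of the norm — which involves both $\|\n v_k\|_2^2$ and the weighted term $\ird V(x)\,v_k^2\,dx$ — is fully recovered in the limit; the nonlocal term $V_1$ is monotone and convex-like but one must be careful that $V_1'(v_k)[v_k-u_n]\to 0$, which again reduces to $L^{8/3}$-control via \eqref{eq:estV2}. A secondary technical point is justifying that the dilation generator $\tfrac{d}{dt}\big|_{t=1}u_{n,t}=2u_n+x\cdot\n u_n$ genuinely belongs to $E$ and that the pairing with $I'_{\l_n}(u_n)$ is licit; once that is granted, the Pohozaev identity is a bookkeeping of scaling exponents.
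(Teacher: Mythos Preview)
Your strategy for the first two bullets is essentially the paper's: fix $\bar\l=\l_n$, take the bounded Palais--Smale sequence from Theorem~\ref{JJ}, use the compact embedding $E\hookrightarrow L^q$ to get weak convergence in $E$ and strong $L^q$-convergence, then upgrade to strong convergence in $E$ (the paper cites \cite[Proposition~3.1]{CW}). One inaccuracy: the term $V_1'(v_k)[v_k-u_n]$ does \emph{not} ``reduce to $L^{8/3}$-control via \eqref{eq:estV2}''. Estimate \eqref{eq:estV2} concerns $V_2$ only; the kernel $\log(2+|x-y|)$ in $V_1$ is unbounded and is handled by splitting $\log(2+|x-y|)\le \log(2+|x|)+\log(2+|y|)$ and using the \emph{weighted} part of the $E$-norm together with the strong $L^2$-convergence (exactly as the paper later does explicitly in \eqref{eq:estV'1}). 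So your scheme works, but not for the reason you give.

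The Pohozaev step, however, has a real gap. You propose to differentiate $t\mapsto I_{\l_n}(u_{n,t})$ at $t=1$ and pair $I_{\l_n}'(u_n)$ with $\frac{d}{dt}\big|_{t=1}u_{n,t}=2u_n+x\cdot\n u_n$. The problem is not the logarithmic weight; it is that for a generic $u_n\in E\subset H^1(\RD)$ one has no reason for $x\cdot\n u_n$ to belong even to $L^2(\RD)$, let alone $H^1(\RD)$ or $E$. Your remark that ``the extra polynomial weight \ldots\ is absorbed'' misidentifies the obstruction. Making the pairing licit requires regularity of $u_n$ beyond $E$, and this is precisely what the paper supplies: since $\phi_{u_n}\in L^\infty_{\mathrm{loc}}(\RD)$ (cf.\ \cite{CW}), the equation $-\Delta u_n = -\phi_{u_n}u_n - W'(u_n)$ and \cite[Theorem~8.8]{GT} give $u_n\in W^{2,2}_{\mathrm{loc}}$, and a bootstrap yields $u_n\in C^2(\RD)$; the identity $P_{\l_n}(u_n)=0$ then follows as in \cite{DW} by the classical device (multiply by $x\cdot\n u_n$, integrate over balls, pass to the limit). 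A side remark: the clean form of $P_{\l_n}$---with no $\|\n u\|_2^2$ term---comes naturally from the dilation $u(\cdot/t)$ (whose Dirichlet energy is scale-invariant in two dimensions), not from $t^2u(t\cdot)$; your plan to remove the gradient term a posteriori via the Nehari relation is possible but circuitous, and still presupposes the same regularity.
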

 	\begin{proof}
			Take any $\bar n\ge 1$. We set $\bar \l=\l_{\bar n}$ and consider $ (v_n)_n$ a bounded sequence in $E$ such that $\|I'_{\bar \l}(v_n)\|\to 0$ and $I_{\bar \l}(v_n)\to c_{\bar \l}$.\\
			By boundedness and compact embedding, there exists $v\in E$ such that, up to a subsequence,
				\begin{align*}
					&v_n\rightharpoonup v\hbox{ in } E,\\
					&v_n\to v\hbox{ in } L^q(\RD),\;\forall q\ge 2.
				\end{align*}
			Now, we proceed as in the final part of \cite[Proof of  Prooposition 3.1]{CW} (actually the proof  is easier since we do not need to translate the sequence) and deduce that $v_n\to v$ in $E$ (observe that we already know that $v_n\to v$ in $L^2(\RD)$). As a consequence we have $I'_{\bar \l}(v)=0$ and $I_{\bar\l}(v)=c_{\bar\l}$.\\
			To see that $P_{\bar\l}(v)=0$ we first recall that, since for every $u\in H^1(\RD)$ the function $\phi_u\in L^{\infty}_{loc}(\RD)$ (see \cite{CW}), we can deduce by \cite[Theorem 8.8]{GT} that $v$ is in $W^{2,2}_{loc}(\RD)$ and use standard regularity arguments to show that $v\in C^2(\RD)$. Then $P_{\bar\l}(v)=0$ follows as in \cite{DW}.
		
	\end{proof}
 
Now we are ready to prove our result
 
 	\begin{proof}[Proof. of Theorem \ref{main}]
		Consider $(u_n)_n$ as in the previous Proposition. We are going to prove that it is bounded in $E$. For making the notation less cumbersome, we will write simply $n$ in the place of $\l_n$ as a subscript.\\
		Indeed we have
			\begin{align}
				&\frac 12 \ird |\n u_n|^2\, dx +  \frac 1 4V_1(u_n)+\ird W(u_n)\, dx-\frac{\l_n}{4}V_2(u_n)=c_n+o_n(1),\label{eq:func}\\
				&\ird |\n u_n|^2 \, dx +  V_1(u_n)+\ird W'(u_n)u_n\, dx-\l_n V_2(u_n)=0,\label{eq:neh}\\
				&  V_1(u_n)+\frac 1{8\pi}\left(\ird u_n^2\,dx\right)^2+2\ird W(u_n)\, dx-\l_nV_2(u_n)=0.\label{eq:poho2}
			\end{align}
		
		Assume $W3$. Dividing \eqref{eq:poho2} by $\frac 18$ and summing \eqref{eq:func} we have
			\begin{multline}\label{eq:mix}
				\frac 12 \ird |\n u_n|^2\, dx + \frac 3 8V_1(u_n)+\frac 1{64\pi}\left(\ird u_n^2\,dx\right)^2 \\
				+\frac 54 \ird W(u_n)\, dx-\frac{3\l_n}{8}V_2(u_n)=c_n+o_n(1).
			\end{multline}
		Now, multiplying \eqref{eq:neh} by $-\frac 3 8 $ and adding \eqref{eq:mix} we get
			\begin{multline*}
				\frac 18 \ird |\n u_n|^2\, dx +\frac 1{64\pi}\left(\ird u_n^2\,dx\right)^2 \\
				+\frac 54 \ird W(u_n)\, dx-\frac 3 8 \ird W'(u_n)u_n\,dx =c_n + o_n(1).
			\end{multline*}
		From this, $W2$ and Gagliardo-Nirenberg inequality, it follows
			\begin{align*}
				\frac 18 \ird |\n u_n|^2\, dx +\frac 1{64\pi}\left(\ird u_n^2\,dx\right)^2 & \le M + C(\|u_n\|_2^2+ \|u_n\|_{p}^{p})\\
				&\le M+C(\|u_n\|_2^2+\|\n u_n\|_2^{p-2}\|u_n\|_2^2)\\
				&\le M + C (1+\|\n u_n\|_2^{p-2})\|u_n\|_2^2.
			\end{align*}
		Now, applying the inequality $|ab|\le \eps a^2+C_\eps b^2$ which holds for every $\eps>0$ and a corresponding $C_\eps>0$, we have
			\begin{equation*}
				\frac 18 \ird |\n u_n|^2\, dx +\frac 1{64\pi}\left(\ird u_n^2\,dx\right)^2 \le M + C_\eps (1+\|\n u_n\|_2^{p-2})^2 + \eps \|u_n\|_2^4
			\end{equation*}
		which, for $\eps < \frac 1 {32}$ implies boundedness in $H^1(\RD)$.
		
		Now assume $W3'$. Multiply \eqref{eq:neh} by $-\frac 14$ and add \eqref{eq:func}. We immediately see that $(\|\n u_n\|)_n$ is bounded. Now, by \eqref{eq:poho2} and \eqref{eq:estV2}, we deduce that
			 \begin{equation*}
				 \|u_n\|_2^4\le C \|u_n\|_2^3\|\n u_n\|_2
			 \end{equation*}
		and then we prove boundedness in $H^1(\RD)$ also in this case.
		
		 Again as in \cite[Proof of  Prooposition 3.1]{CW}, we deduce that, up to translations and a suitable choise of a subsequence, $(u_n)_n$ strongly converges in $E$ to a function $\bar u\neq 0$. Of course $I'(\bar u)=0$ and we conclude.
 	\end{proof}
 
 	\section{The problem $\mathcal P_-$}\label{sec:P_-}
 	
 		\subsection{The nonautonomous case}
 			In this section we will introduce a variatonal approach in order to solve the problem $(\mathcal{P}_-)$. 
 			Define $X=\{u\in \H\mid \ird (1+|x|^\a)u^2\,dx<+\infty \}$ endowed with the norm
 				$$\|u\|=\left(\|\n u\|_2^2+\ird (1+|x|^\a)u^2\,dx\right)^{\frac 12}$$
 			induced by the scalar product 
 				$$(u\cdot v)=\ird \n u\n v\,dx+ \ird (1+ |x|^\a) uv\, dx.$$
 			We denote by $\|\cdot\|_*$ the following weighted $L^2$ norm
 				$$\|u\|_*=\left(\ird (1+|x|^\a)u^2\,dx\right)^{\frac 12},$$
 			so that $\|u\|^2=\|\n u\|_2^2+\|u\|_*^2$.\\
 			Observe that for every $\a>0$ there exists $C_\a>1$ such that, taken any $r\ge 0$, we have that 
 				\begin{equation}\label{eq:Ca}
					\log(2+r)\le C_\a +r^a.
 				\end{equation}
 			and then for every $u\in X$
 				\begin{equation}\label{inclusion}
	 				\ird\log (2+|x|) u^2\, dx\le \ird (C_\a+|x|^\a) u^2\, dx\le C_\a \|u\|^2_*.
 				\end{equation}
 			By using the same arguments as in \cite[Lemma 2.2]{CW}, we deduce that the functional
 			\begin{equation*}
 			I_\a (u)=\frac 12 \ird \big(|\n u|^2+(1+|x|^\a)u^2\big)\, dx-  \frac 1 4V_0(u)+\frac 1 p \ird |u|^p\, dx
 			\end{equation*}
 			is well defined and $C^1$ in $X$. Moreover its critical points are related with solutions of \eqref{eq:e2} as for $(\mathcal P_+)$. \\
 			In particular:

 				\begin{align}\label{eq:estimate}
	 				V_1(u)&=\frac 1{2\pi} \ird\ird \log(2+|x-y|) u^2(x) u^2(y)\, dx\,dy\\
	 								&\le \frac 1{2\pi}  \ird\ird \log(2+|x|) u^2(x) u^2(y)\, dx\,dy\nonumber\\
	 								&\qquad+\frac 1{2\pi} \ird\ird \log(2+|y|) u^2(x) u^2(y)\, dx\,dy\nonumber\\
	 								&=\frac 1{\pi} \ird \log(2+|x|) u^2(x)\, dx \ird u^2(y)\,dy\nonumber\\
	 								&\le\frac{C_\a}{\pi}   \ird (1+|x|^\a) u^2(x)\, dx \ird u^2(y)\,dy\nonumber\\
	 								&=\frac {C_\a}\pi \|u\|_2^2\|u\|_*^2.\nonumber
 				\end{align}

 \begin{proposition}
 	There exist $\rho>0$, $\beta > 0$ and $\bar u\in X$ such that, said $B_{\rho}$ the ball of radius $\rho$ in $E$,
 	\begin{itemize}
 		\item ${I_\a}_{|B_{\rho}}\ge 0$ and ${I_\a}_{|\partial B_{\rho}}\ge \beta$,
 		\item $\|\bar u\|>\rho$ and ${I_\a}(\bar u)<0$.
 	\end{itemize}
 \end{proposition}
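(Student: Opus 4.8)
The plan is to prove the two items separately.

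\emph{First item.} Writing $I_\a(u)=\frac12\|u\|^2-\frac14 V_0(u)+\frac1p\|u\|_p^p$ and using $V_0=V_1-V_2$ together with $V_2\ge0$, one has $-\frac14 V_0(u)\ge-\frac14 V_1(u)$; feeding in the bound $V_1(u)\le\frac{C_\a}{\pi}\|u\|_2^2\|u\|_*^2$ from \eqref{eq:estimate} and the elementary inequalities $\|u\|_2\le\|u\|_*\le\|u\|$ (the first because $\|u\|_*^2=\ird(1+|x|^\a)u^2\,dx\ge\|u\|_2^2$) gives
\[
I_\a(u)\ge\tfrac12\|u\|^2-\tfrac{C_\a}{4\pi}\|u\|^4=\|u\|^2\Big(\tfrac12-\tfrac{C_\a}{4\pi}\|u\|^2\Big).
\]
Choosing $\rho>0$ with $\frac{C_\a}{4\pi}\rho^2\le\frac14$ then yields ${I_\a}_{|B_\rho}\ge0$ and ${I_\a}_{|\partial B_\rho}\ge\beta:=\frac14\rho^2>0$.

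\emph{Second item.} The scaling $u\mapsto t^2u(t\cdot)$ used for $(\mathcal P_+)$ is of no help here because the sign in front of $V_0$ is reversed. Instead I would fix any $u\in C^\infty_c(\RD)\setminus\{0\}$ and test $I_\a$ on the two-parameter family $w_t(x):=t^{1-\a/2}u(tx)$, letting $t\to0^+$. The change of variables that produced the logarithmic term in the analysis of $(\mathcal P_+)$ gives here
\begin{multline*}
I_\a(w_t)=\frac12 t^{2-\a}\|\n u\|_2^2+\frac12 t^{-\a}\|u\|_2^2+\frac12 t^{-2\a}\ird|x|^\a u^2\,dx\\
-\frac14 t^{-2\a}V_0(u)+\frac{\log t}{8\pi}\,t^{-2\a}\|u\|_2^4+\frac1p t^{p-p\a/2-2}\|u\|_p^p .
\end{multline*}
As $t\to0^+$ the factor $t^{-2\a}$ dominates $t^{-\a}$ and $t^{2-\a}$, so the gradient and mass terms are negligible, while the terms carrying $t^{-2\a}$ sum to $t^{-2\a}\big(\frac12\ird|x|^\a u^2\,dx-\frac14 V_0(u)+\frac{\log t}{8\pi}\|u\|_2^4\big)$, whose parenthesis tends to $-\infty$ because of $\log t$; since $t^{-2\a}\to+\infty$, this forces $I_\a(w_t)\to-\infty$ as soon as the last term $\tfrac1p t^{p-p\a/2-2}\|u\|_p^p$ does not blow up faster, i.e.\ $p-p\a/2-2\ge-2\a$, equivalently $(p-4)\a\le2p-4$. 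This is precisely the hypothesis of Theorem \ref{main2}: it holds automatically when $2<p\le4$, and reduces to $\a\le\frac{2p-4}{p-4}$ when $p>4$, the boundary case being admissible since there the $|u|^p$ term only adds a constant inside the parenthesis. Finally $\|w_t\|^2\ge t^{-2\a}\ird|x|^\a u^2\,dx\to+\infty$, so $\|w_t\|>\rho$ for small $t$, and one takes $\bar u:=w_{t_0}$ with $t_0$ small enough.

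The delicate point is the second item: the reversed Poisson sign removes the clean homogeneity that made the $(\mathcal P_+)$ mountain-pass geometry straightforward, and one is forced to balance the amplitude against the dilation so that the negative logarithmic contribution outpaces the positive power nonlinearity; this bookkeeping of exponents is exactly what pins down the admissible range of $(\a,p)$.
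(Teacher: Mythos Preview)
Your proof is correct and follows essentially the same route as the paper. For the first item both you and the paper bound $-\tfrac14 V_0$ by $-\tfrac14 V_1$ and invoke the estimate \eqref{eq:estimate} to get $I_\a(u)\ge \tfrac12\|u\|^2 - C\|u\|^4$. For the second item the paper tests on $u_t=t^r u(\cdot/t)$ with a free amplitude exponent $r>0$ and lets $t\to+\infty$, obtaining the two inequalities $2r+2+\a\le 4r+4$ and $pr+2\le 4r+4$; your scaling $w_t=t^{1-\a/2}u(t\cdot)$ with $t\to 0^+$ is the same family after the substitution $t\mapsto 1/t$ with the specific choice $r=\a/2-1$, which makes the $|x|^\a$--term scale exactly like the $V_0$--term and hence saturates the first inequality. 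Both computations lead to the identical constraint $(p-4)\a\le 2p-4$, i.e.\ the hypothesis of Theorem~\ref{main2}.
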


\begin{proof}
	The first property follows by \eqref{eq:estimate} since, for every $u\in X$, we have
		\begin{equation*}
		I(u) \ge \frac 12 \|u\|^2  - \frac  {C_\a}{\pi} \|u\|^4 .
		\end{equation*}
	As to the second geometrical property, we consider $u\in X$ and test the functional in $u_t=t^r u(\cdot/t)$
		\begin{align*}
			I_\a(u_t)&= \frac{t^{2r}}{2}\|\n u\|_2^2 + \frac{t^{2r+2}}2\ird (1+t^\a|x|^a)u^2\, dx\\ 
			&\qquad - \frac{t^{4r+4}\log t}{8\pi}\left(\ird u^2\, dx \right)^2 - \frac{t^{4r+4}}{4} V_0(u) + \frac{t^{pr+2}}{p}\ird |u|^p\, d.x
		\end{align*}
	In order to have $\lim_{t\to + \infty}I_\a(u_t)=-\infty$ we look for a number $r>0$ such that 
		\begin{equation*}
			\left \{\begin{array}{l}
				2r+2+\a\le 4r+4\\
				pr+2\le 4r+4.
			\end{array}
			\right.
		\end{equation*}
	By simple computations one can see that such an $r>0$ exists for every $\a>0$ if $p\le 4$, otherwise, if $p>4$, we need that $\a\le 2 \left(\frac{p-2}{p-4}\right)$. 
\end{proof}
Define $c_\a$ as the mountain pass level of $I_\a$.
	\begin{lemma}\label{le:Cerami}
			There exists a Cerami sequence for $I_\a$ at the level $c_\a$, namely a sequence $(u_n)_n$ in $X$ such that
				\begin{align*}
					&I_\a(u_n)\to c_\a,\\
					&\|I'(u_n)\| (1+\|u_n\|)\to 0,
				\end{align*}
			for which $J(u_n)\to 0$ where 
				\begin{multline}
					J(u)=\frac{2}{p-4}\|\n u\|_2^2+\frac{p-2}{p-4}\|u\|_2^2+\left(\frac{p-2}{p-4}+\frac \a 2\right)\ird |x|^\a u^2\,dx \\-\frac 1 {8\pi} \|u\|_2^4- \frac{p-2}{p-4} V_0(u)+ 4\left(\frac{p-2}{p(p-4)}\right)\|u\|^p_p
				\end{multline}
			if $p>4$, and, for an arbitray positive $r$ such that $r\ge\frac{\a-2}{2},$
				\begin{multline}
					J(u)=r\|\n u\|_2^2+(r+1)\|u\|_2^2+\left(r+1+\frac \a 2\right)\ird |x|^\a u^2\,dx \\-\frac 1 {8\pi} \|u\|_2^4- (r+1) V_0(u)+ \frac{pr+2}{p}\|u\|^p_p
				\end{multline}
			if $2< p\le 4$.
	\end{lemma}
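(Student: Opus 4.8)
The plan is to recover the extra Poho\v zaev-type relation $J(u_n)\to 0$ by working on the enlarged space $\R\times X$ together with the one-parameter group of dilations already used to prove the second geometric property of the previous Proposition. For $\theta\in\R$ and $u\in X$ set $\theta\star u:=e^{r\theta}u(e^{-\theta}\,\cdot)$, i.e. the rescaling $u_t$ with $t=e^{\theta}$, where $r=\frac{2}{p-4}$ if $p>4$ and $r$ is any number with $r\ge\frac{\a-2}{2}$ if $2<p\le4$; these are precisely the exponents occurring in the definition of $J$. Define
\[
\Psi:\R\times X\to\R,\qquad \Psi(\theta,u):=I_\a(\theta\star u).
\]
Writing $\Psi(\theta,u)$ out in terms of $\|\n u\|_2^2$, $\|u\|_2^2$, $\ird|x|^\a u^2$, $V_0(u)$ and $\|u\|_p^p$ — exactly the quantities carrying the powers of $e^{\theta}$ in the Proposition's computation — one sees that $\Psi$ is of class $C^1$; differentiating those powers at $\theta=0$ gives $\partial_\theta\Psi(\theta,u)\big|_{\theta=0}=J(u)$, and since $\theta\star(\sigma\star u)=(\theta+\sigma)\star u$ we get $\partial_\theta\Psi(\theta,u)=J(\theta\star u)$ and $\partial_u\Psi(\theta,u)[h]=I_\a'(\theta\star u)[\theta\star h]$.

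Next I would show that $\Psi$ has the mountain pass geometry on $\R\times X$ with the \emph{same} critical level. Since $\Psi(0,\cdot)=I_\a$ and $\Psi(\cdot,0)\equiv0$, the geometry established for $I_\a$ lifts: setting $\GT:=\{\wz\in C([0,1],\R\times X):\wz(0)=(0,0),\ \Psi(\wz(1))<0\}$ and $\tilde c:=\inf_{\wz\in\GT}\max_{[0,1]}\Psi\circ\wz$, one checks $\tilde c=c_\a>0$. Indeed ``$\le$'' follows by lifting a path $\g$ for $I_\a$ to $(0,\g)\in\GT$, and ``$\ge$'' by sending $\wz=(\sigma,\eta)\in\GT$ to the continuous path $t\mapsto\sigma(t)\star\eta(t)$, which joins $0$ to a point where $I_\a<0$ and has the same maximum along $[0,1]$; here one uses that $(\theta,u)\mapsto\theta\star u$ is continuous from $\R\times X$ to $X$, which is immediate from the weighted structure of $X$.

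Then, applying the mountain pass theorem in its Cerami version to $\Psi$ on the Banach space $\R\times X$, one obtains $(\theta_n,u_n)$ with $\Psi(\theta_n,u_n)\to c_\a$ and $(1+\|(\theta_n,u_n)\|)\,\|\Psi'(\theta_n,u_n)\|\to0$. Put $v_n:=\theta_n\star u_n$. Then $I_\a(v_n)=\Psi(\theta_n,u_n)\to c_\a$; the $\theta$-component of the Cerami condition gives $J(v_n)=\partial_\theta\Psi(\theta_n,u_n)\to0$ (indeed even $(1+|\theta_n|)|J(v_n)|\to0$); and the $u$-component, combined with the identity $I_\a'(v_n)[w]=\partial_u\Psi(\theta_n,u_n)[(-\theta_n)\star w]$, is meant to yield $\|I_\a'(v_n)\|(1+\|v_n\|)\to0$, so that $(v_n)_n$ is the sought Cerami sequence for $I_\a$ at level $c_\a$ with $J(v_n)\to0$.

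The main obstacle is precisely this last implication: the linear map $w\mapsto(-\theta_n)\star w$ has $X$-operator norm of order $e^{c|\theta_n|}$ when $\theta_n<0$, so transferring the Cerami bound from $\R\times X$ down to $v_n$ is legitimate only once the scaling parameters $\theta_n$ have been controlled. I would deal with this in one of two equivalent ways: either equip $\R\times X$ with the Finsler structure for which the action map $(\theta,u)\mapsto(\theta,\theta\star u)$ is an isometry — so that $\|\partial_u\Psi(\theta,u)\|$ computed in this metric equals $\|I_\a'(\theta\star u)\|_{X^*}$ and the factor $e^{c|\theta_n|}$ never appears — or show directly that $(\theta_n)_n$ is bounded, using $\Psi(\theta_n,u_n)\to c_\a>0$ together with the explicit dependence of $\Psi(\theta,u)$ on $\theta$; this is where the hypotheses on $p$ and $\a$ re-enter, being exactly what forces $t\mapsto\Psi(\log t,u)$ to behave as expected as $t\to0^{+}$ and $t\to+\infty$. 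Everything else — the scaling identities, the $C^1$-regularity of $\Psi$, and the lifting of the geometry — is routine bookkeeping, and I expect the control of $\theta_n$ (equivalently, the correct Finsler set-up) to be the only genuinely delicate point of the argument.
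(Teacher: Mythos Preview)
Your proposal is correct and is precisely the approach the paper takes (by reference to \cite{DW}, Lemma~3.2, and ultimately \cite{HIT}): one introduces the same dilation map $\eta(\theta,u)=e^{r\theta}u(e^{-\theta}\cdot)$ with $r=\tfrac{2}{p-4}$ for $p>4$ and $r\ge\tfrac{\a-2}{2}$ for $2<p\le4$, sets $\Psi=I_\a\circ\eta$, checks that the min-max level for $\Psi$ on $\R\times X$ equals $c_\a$, and runs the Cerami mountain pass on the enlarged space so that the $\theta$-derivative yields $J$. Regarding the point you single out as delicate, the standard resolution in \cite{DW,HIT} is sharper than mere boundedness of $\theta_n$: since any path $(\sigma,\eta)\in\GT$ may be replaced by $(0,\,\sigma(\cdot)\star\eta(\cdot))$ without changing its maximum, one may take almost-optimal paths with first component identically zero, and then Ekeland's principle produces $(\theta_n,u_n)$ with $\theta_n\to0$, which dissolves the operator-norm issue outright.
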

	\begin{proof}
		The proof is analogous to that provided in \cite[Lemma 3.2]{DW}, replacing the continuous map $\rho$ with
			\begin{equation}\label{eq:eta}
				\eta:\R\times X\to X,\quad \eta(t,v)(x)= 
						\left \{
						\begin{array}{ll}
							e^{\frac{2t}{p-4}}v(e^{-t}x)&\hbox{if } p> 4\\
							e^{rt}v(e^{-t}x)&\hbox{if } 2< p \le 4,
						\end{array}
						\right.
			\end{equation}
		where $r>0$ and $r\ge\frac{\a-2}{2}$.
		Indeed, if we set $\varphi=I_\a\circ \eta$, then for every $(t,v)\in\R\times X$ we have
			\begin{multline*}
				\varphi(t,v)=\frac{e^{\frac{4t}{p-4}}}{2}\|\n v\|_2^2 + \frac{e^{\frac{2pt -4t}{p-4}}}2\ird (1+e^{\a t}|x|^a)v^2\, dx\\ 
				\qquad - \frac{te^{\frac{4pt-8t}{p-4}}}{8\pi}\left(\ird v^2\, dx \right)^2 - \frac{e^{\frac{4pt-8t}{p-4}}}{4} V_0(v) + \frac{e^{\frac{4pt-8t}{p-4}}}{p}\ird |v|^p\, dx
			\end{multline*}
		if $p>4$, and 
			\begin{multline*}
				\varphi(t,v)=\frac{e^{2rt}}{2}\|\n v\|_2^2 + \frac{e^{(2r+2)t}}2\ird (1+e^{\a t}|x|^a)v^2\, dx\\ 
				\qquad - \frac{te^{(4r+4) t}}{8\pi}\left(\ird v^2\, dx \right)^2 - \frac{e^{(4r+4) t}}{4} V_0(v) + \frac{e^{(pr+2)t}}{p}\ird |v|^p\, dx
			\end{multline*}
		if $2< p \le 4$.
		
					In any case, since for every $v\in X$ we have $\lim_{t\to + \infty}\varphi(t,v)=-\infty$, the class of paths 
					$$\tilde\G:\{\tilde\g:[0,1]\to\R\times X\mid \tilde\g \hbox{ is continuous}, \tilde \g(0)=(0,0),  \varphi(\tilde\g(1))<0\}$$
					is nonempty. This permits to repeat the arguments in \cite[Lemma 3.2]{DW} and conclude.
	\end{proof}

	 \begin{proof}[Proof. of Theorem \ref{main2}]
			First we are going to prove that there exists a bounded Cerami sequence for the functional $I_\a$ at the level $c_\a$.\\
		Consider $(u_n)_n$ the sequence obtained in Lemma \ref{le:Cerami} and prove that it is bounded in $X$.
		
		We distinguish two cases.
		\begin{itemize}
			\item[{\it $1^{st}$ case}:] $p>4$. Then 
			
			\begin{align}
			&\frac 12 \ird \big(|\n u_n|^2+(1+|x|^\a)u_n^2\big)\, dx-  \frac 1 4V_0(u_n)+\frac 1 p \ird |u_n|^p\, dx= c_\a +o_n(1)\label{fun}\\
			&\ird \big(|\n u_n|^2+(1+|x|^\a)u_n^2\big)\, dx-  V_0(u_n)+ \ird |u_n|^p\, dx= o_n(1)\label{ne}\\
			&\frac{2}{p-4}\|\n u_n\|_2^2+\frac{p-2}{p-4}\|u_n\|_2^2+\left(\frac{p-2}{p-4}+\frac \a 2\right)\ird |x|^\a u_n^2\,dx\label{po} \\
			&\qquad\qquad-\frac 1 {8\pi} \|u_n\|_2^4- \frac{p-2}{p-4} V_0(u_n)+ 4\left(\frac{p-2}{p(p-4)}\right)\ird |u_n|^p\, dx=o_n(1).\nonumber
			\end{align} 
			Multiplying \eqref{po} by $-\frac{p-4}{4(p-2)}$ and adding \eqref{fun} we obtain 
				\begin{multline}\label{eq:bou1}
					\frac{p-3}{2(p-2)}\|\n u_n\|_2^2+\frac 14 \|u_n\|_2^2+\frac{2(p-2)-(p-4)\a}{8(p-2)}\ird |x|^\a u_n^2\, dx\\
					 + \frac{p-4}{32\pi(p-2)}\|u_n\|_2^4=c_\a + o_n(1)
				\end{multline}
			
			\item[{\it $2^{nd}$ case}:] $2<p\le4$. Then, we again have \eqref{fun} and, instead of \eqref{po},
				\begin{multline}\label{po2}
					r\|\n u_n\|_2^2+(r+1)\|u_n\|_2^2+\left(r+1+\frac \a 2\right)\ird |x|^\a u_n^2\,dx \\
					-\frac 1 {8\pi} \|u_n\|_2^4 - (r+1) V_0(u_n)+ \frac{pr+2}{p}\|u_n\|^p_p=o_n(1)
				\end{multline}
			where $r>\max (0,(\a-2/2))$.\\ 
			Multiplying \eqref{po2} by $-\frac{1}{4(r+1)}$ and adding \eqref{fun} we obtain
				\begin{multline}\label{eq:bou2}
				\frac{r+2}{4(r+1)}\|\n u_n\|_2^2+\frac 14 \|u_n\|_2^2+\frac{2(r+1)-\a}{8(r+1)}\ird |x|^\a u_n^2\, dx\\
				+ \frac{1}{32\pi(r+1)}\|u_n\|_2^4+\frac{(4-p)r+2}{4p(r+1)}\|u_n\|_p^p=c_\a + o_n(1).
				\end{multline}				
		\end{itemize}
	
	From \eqref{eq:bou1} and \eqref{eq:bou2} we deduce that the sequence $(u_n)_n$ is in any case bounded in $\H$.\\ 
	Now, if $2<p\le 4$, from \eqref{eq:bou2} we soon deduce that $(u_n)_n$ is bounded also in the norm $\|\cdot\|_*$.\\ 
	If $p>4$, comparing \eqref{fun} with \eqref{ne} in order to eliminate $V_0$, we get  
		\begin{align*}
			\frac 14 \ird \big(|\n u_n|^2+(1+|x|^\a)u_n^2\big)\, dx&=  c_\a +\frac{p-4}{4p} \ird |u_n|^p\, dx+o_n(1)\\
			&\le C+o_n(1),
		\end{align*}
	and then, again $(\|u_n\|_*)_n$ is bounded.
	
	Now, up to a subsequence, we are allowed to assume that there exists $\bar u \in X$ such that 
		\begin{align}
			&u_n\rightharpoonup\bar u\hbox{ in }X,\label{eq:weak}\\
			&u_n\to\bar u\hbox{ in } L^q(\RD),\hbox{ for any } q\ge 2.\label{eq:strong}
		\end{align}
	We are going to show that $u_n\to \bar u$ strongly in $X$. 
	
	First observe that, since $\langle I'_\a(u_n),u_n-\bar u\rangle \to 0$, by \eqref{eq:weak} and \eqref{eq:strong}, we have
	 \begin{align}\label{eq:norms}
		 o_n(1)=\|u_n\|^2-\|\bar u\|^2 - \frac 14 \left[V'_1(u_n)(u_n-\bar u)-V'_2(u_n)(u_n-\bar u)\right].
	 \end{align}
	 By computations based on the application of Hardy-Littlewood-Sobolev inequality, we have
	 	\begin{equation}\label{eq:estV'2}
	 		|V'_2(u_n)(u_n-\bar u)|\le C \|u_n\|^3_{\frac 83}\|u_n-\bar u\|_{\frac 83}=o_n(1).
	 	\end{equation}
	 On the other hand, by \eqref{eq:Ca} and \eqref{eq:strong},
	 	\begin{align}\label{eq:estV'1}
		 	|V'_1(u_n)(u_n-\bar u)|&=\frac 1 {2\pi}\left|\ird\ird \log (2+|x-y|) u^2_n(x)u_n(y)(u_n(y)-\bar u(y))\, dx dy\right|\\
	 								&\le \frac 1{2\pi} \left| \ird\ird \log(2+|x|) u_n^2(x)u_n(y) (u_n(y)-\bar u(y))\, dx\,dy\right|\nonumber\\
	 								&\qquad+\frac 1{2\pi}\left| \ird\ird \log(2+|y|) u_n^2(x) u_n(y)(u_n(y)-\bar u(y))\, dx\,dy\right|\nonumber\\
	 								&\le \frac 1{2\pi} \left|\ird\ird (C_\a+|x|^\a) u_n^2(x) u_n(y)(u_n(y)-\bar u(y))\, dx\,dy\right|\nonumber\\
	 								&\qquad+\frac 1{2\pi}\|u_n\|_2^2 \left|\ird \log(2+|y|) u_n(y)(u_n(y)-\bar u(y))\,dy\right|\nonumber\\
									&\le\frac {C_\a}{2\pi} \|u_n\|_*^2\|u_n\|_2\|u_n-\bar u\|_2\nonumber\\
									&\qquad+\frac 1{2\pi}\|u_n\|_2^2\left| \ird \log(2+|y|) u_n(y)(u_n(y)-\bar u(y))\,dy\right|\nonumber\\
									&=o_n(1)+\frac 1{2\pi}\|u_n\|_2^2 \left|\ird \log(2+|y|) u_n(y)(u_n(y)-\bar u(y))\,dy\right|.\nonumber
	 	\end{align}
	 	Take $C'_\a>1$ such that for any $r>0$
	 			$$\log^2(2+r)\le C'_\a + r^\a.$$ 
	 	Again by \eqref {eq:strong}, we have
	 		\begin{align*}
				\left|\ird \log(2+|y|) u_n(y)(u_n(y)-\bar u(y))\,dy\right|&\le \left(\ird \log^2(2+|y|) u_n^2(y)\, dy\right)^{\frac 12}\|u_n-\bar u\|_2\\
				&\le\left(\ird (C_\a'+|x|^\a)) u_n^2(y)\, dy\right)^{\frac 12}\|u_n-\bar u\|_2\\
				&=o_n(1),
	 		\end{align*}
	 	and then, by \eqref{eq:estV'1}, 
	 		\begin{equation}\label{eq:V'1bis}
	 			V'_1(u_n)(u_n-\bar u)=o_n(1).
	 		\end{equation}
	 	Putting together \eqref{eq:norms}, \eqref{eq:estV'2}  and \eqref{eq:V'1bis}, by \eqref{eq:weak} we deduce that $u_n\to \bar u$ in $X$. Since $(u_n)_n$ is a Cerami sequence for $I_\a$ at the mountain pass level $c_\a$, we conclude that $\bar u$ is a nontrivial solution to \eqref{eq:e1}.
	 \end{proof}
 
 \subsection{The autonomous case}
 	In this section we assume that the hypotheses in Theorem \ref{main3} are satisfied. A large part of the arguments are similar to those in the previous subsection, so we will sketch the proof framework.
 	
 	Consider the space $\Ha$, defined as the closure of $C_0^{\infty}(\RD)$ with respect to
the norm $\|\cdot\|$ defined by
 	 	\begin{equation*}
 	 		\|\cdot\|=\|\n \cdot\|_2+ \|\cdot\|_\b
 	 	\end{equation*}
and denote by $\Har$ the subspace of radial functions. By a standard method (see for example \cite{Br}) we can see that $\Ha\hookrightarrow L^q(\RD)$ for every $q\ge \beta$. Moreover, since by Strauss radial Lemma \cite{St}, there exist $C$ and $C'$ positive constants such that for every $u\in\Har$ 
 		\begin{equation}
			|u(x)|\le \frac C{|x|}\| u\|_{H^1}\le \frac {C'}{|x|}\| u\|,\quad |x|\ge 1,
 		\end{equation}
usual arguments lead to conclude that the embedding $\Har\hookrightarrow L^q(\RD)$ is compact for $q>\beta$.
Moreover, 
	\begin{align}\label{eq:incl}
		\ird \log (2+|x|)u^2\, dx&\le \log 3\int_{B_1} u^2\, dx\\
		&\qquad+{C'}^{2-\b}\|u\|^{2-\b}\int_{\RD\setminus B_1} \frac{\log(2+|x|)}{|x|^{2-\b}}|u|^\b \, dx\nonumber\\
		&\le C\left(\|u\|_2^2+ \|u\|^{2-\b}\|u\|_{\b}^{\b}\right)\le C \|u\|^2\nonumber
	\end{align}
 	 
and then the functional 
 	 		$$I_\b(u)=\frac 12 \ird |\n u|^2 \, dx -  \frac 1 4V_0(u)+\ird \left(\frac 1 \b |u|^\b+\frac 1 p |u|^p\right)\, dx$$
 	 		is well defined and $C^1$ in $\Har$. Finally, any critical point of the functional yields a solution to \eqref{eq:e2}.

\begin{proposition}
	There exist $\rho>0$, $\g > 0$ and $\bar u\in \Har$ such that, said $B_{\rho}$ the ball of radius $\rho$ in $\Har$,
	\begin{itemize}
		\item ${I_\b}_{|B_{\rho}}\ge 0$ and ${I_\b}_{|\partial B_{\rho}}\ge \g$,
		\item $\|\bar u\|>\rho$ and ${I_\b}(\bar u)<0$.
	\end{itemize}
\end{proposition}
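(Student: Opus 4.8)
The plan is to argue as in the two preceding mountain-pass propositions, the behaviour of $I_\b$ near the origin being controlled through the inclusion estimate \eqref{eq:incl} and the point with negative energy being produced by an $L^2$-critical dilation. For the local part, write $V_0=V_1-V_2$ and discard the nonnegative terms $\tfrac14 V_2(u)$ and $\tfrac1p\|u\|_p^p$, so that $I_\b(u)\ge \tfrac12\|\n u\|_2^2+\tfrac1\b\|u\|_\b^\b-\tfrac14 V_1(u)$. First I would bound $V_1$: from $\log(2+|x-y|)\le\log(2+|x|)+\log(2+|y|)$, which holds since $(2+|x|)(2+|y|)\ge 2+|x-y|$, one gets $V_1(u)\le \tfrac1\pi\|u\|_2^2\ird\log(2+|x|)u^2\,dx$, and then \eqref{eq:incl} together with the continuous embedding $\Ha\hookrightarrow L^2(\RD)$ gives $V_1(u)\le C\|u\|^4$. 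Since $\b<2$, whenever $\|u\|\le1$ we have $\|u\|_\b^2\le\|u\|_\b^\b$, hence $\tfrac12\|\n u\|_2^2+\tfrac1\b\|u\|_\b^\b\ge c\|u\|^2$ for a suitable $c>0$; thus $I_\b(u)\ge c\|u\|^2-C\|u\|^4$ on the unit ball, and choosing $\rho>0$ with $C\rho^2<c$ yields ${I_\b}_{|B_\rho}\ge0$ and ${I_\b}_{|\partial B_\rho}\ge(c-C\rho^2)\rho^2=:\g>0$.

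For the second property, fix any nonzero radial $u\in C_0^\infty(\RD)$ and, for $t>0$ and $r>0$ to be chosen, set $u_t(x)=t^r u(x/t)$. A change of variables gives $\|\n u_t\|_2^2=t^{2r}\|\n u\|_2^2$, $\|u_t\|_q^q=t^{rq+2}\|u\|_q^q$ for $q\in\{\b,p\}$, and $V_0(u_t)=t^{4r+4}V_0(u)+\tfrac{t^{4r+4}\log t}{2\pi}\|u\|_2^4$, so that
\[
I_\b(u_t)=\frac{t^{2r}}2\|\n u\|_2^2+\frac{t^{\b r+2}}{\b}\|u\|_\b^\b+\frac{t^{pr+2}}p\|u\|_p^p-\frac{t^{4r+4}}4 V_0(u)-\frac{t^{4r+4}\log t}{8\pi}\|u\|_2^4 .
\]
The exponents $2r$ and $\b r+2$ are strictly less than $4r+4$ for every $r>0$ (as $\b<4$), while $pr+2<4r+4$ holds for all $r>0$ if $p\le4$ and for $r<\tfrac2{p-4}$ if $p>4$. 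Fixing such an $r$, the term $-\tfrac{t^{4r+4}\log t}{8\pi}\|u\|_2^4$ dominates as $t\to+\infty$, so $I_\b(u_t)\to-\infty$; taking $\bar t$ large and $\bar u:=u_{\bar t}$ gives $\|\bar u\|\ge\bar t^{\,r}\|\n u\|_2>\rho$ and $I_\b(\bar u)<0$.

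There is no real obstacle, the statement being a straightforward adaptation of the two mountain-pass propositions already proved; the only points deserving care are that $\b<2$ still lets $\|u\|_\b^\b$ control $\|u\|_\b^2$ near the origin (so that the whole $\Ha$-norm, and not merely the Dirichlet part, is dominated there), and that along the dilation $u_t$ the negative logarithmic contribution of $V_0$ must outgrow not only the kinetic energy but also, when $p>4$, the competing positive $L^p$-term, which is exactly what forces the dilation exponent $r$ to be chosen small.
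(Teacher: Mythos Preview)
Your argument is correct and follows the same strategy as the paper's proof. The only difference is that for the second property the paper takes the pure dilation $u_t=u(\cdot/t)$ (i.e.\ $r=0$ in your notation), under which $\|\n u_t\|_2^2$ is constant and both the $L^\b$ and $L^p$ terms scale like $t^2$, so the term $-\tfrac{t^4\log t}{8\pi}\|u\|_2^4$ dominates for every $p>2$ without any case distinction on $p$; the norm $\|u_t\|$ still tends to infinity through $\|u_t\|_\b=t^{2/\b}\|u\|_\b$.
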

\begin{proof}
	The first property is a consequence of \eqref{eq:incl}. Indeed, computing as in \eqref{eq:estimate} we have $V_1(u)\le C \|u\|^4$ for all $u\in\Har$ and then, if $\|u\|$ small enough,
		$$I_\b(u)\ge \frac 1 \b \|u\|^2-C\|u\|^4.$$
	As to the second property,  we consider $u\in \Har$ and test the functional in $u_t= u(\cdot/t)$
	\begin{align*}
	I_\b(u_t)= \frac{1}{2}\|\n u\|_2^2   - \frac{t^{4}\log t}{8\pi}\left(\ird u^2\, dx \right)^2 - \frac{t^{4}}{4} V_0(u) + t^{2}\ird \left(\frac{|u|^\b}\b+\frac{|u|^p}p\right)\, dx.
	\end{align*}
	Then  $\lim_{t\to + \infty}I_\b(u_t)=-\infty$ and we conclude.
\end{proof}
Define $c_\b$ as the mountain pass level for $I_\b$.
 	 		\begin{lemma}\label{le:Cerami2}
 	 		There exists a Cerami sequence for $I_\b$ at the level $c_\b$, namely a sequence $(u_n)_n$ in $X$ such that
 	 		\begin{align*}
 	 		&I_\b(u_n)\to c_\b,\\
 	 		&\|I'(u_n)\| (1+\|u_n\|)\to 0,
 	 		\end{align*}
 	 		for which $P(u_n)\to 0$ where 
 	 		\begin{equation}
 	 		P(u)=-\frac 1 {8\pi} \|u\|_2^4-  V_0(u)+ \frac 2\b \|u\|_\b^\b+\frac 2 p\|u\|_p^p.
 	 		\end{equation}
 	 	\end{lemma}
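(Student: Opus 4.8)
The plan is to mimic the monotonicity-trick argument used in Lemma~\ref{le:Cerami} for the nonautonomous case, adapting the scaling to the present autonomous functional $I_\b$. First I would embed $I_\b$ in a one-parameter family by composing with the dilation map $\eta:\R\times\Har\to\Har$, $\eta(t,v)(x)=v(e^{-t}x)$; writing $\varphi=I_\b\circ\eta$ one computes
\begin{equation*}
\varphi(t,v)=\frac12\|\n v\|_2^2-\frac{te^{4t}}{8\pi}\Big(\ird v^2\,dx\Big)^2-\frac{e^{4t}}{4}V_0(v)+e^{2t}\ird\Big(\frac{|v|^\b}{\b}+\frac{|v|^p}{p}\Big)\,dx,
\end{equation*}
which tends to $-\infty$ as $t\to+\infty$ for every fixed $v\in\Har$, so that the enlarged class of paths $\tilde\G=\{\tilde\g\in C([0,1],\R\times\Har)\mid\tilde\g(0)=(0,0),\ \varphi(\tilde\g(1))<0\}$ is nonempty and its min-max level coincides with $c_\b$ (the geometry having already been established in the preceding proposition).

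Next I would apply the general deformation/min-max result behind \cite[Lemma~3.2]{DW}: since $\varphi$ satisfies the mountain pass geometry on $\R\times\Har$, there is a Cerami sequence $(t_n,v_n)_n$ for $\varphi$ at level $c_\b$ with $t_n\to 0$. Setting $u_n=\eta(t_n,v_n)=v_n(e^{-t_n}\cdot)$, the vanishing of the $t$-derivative $\partial_t\varphi(t_n,v_n)\to 0$ is, after dividing by the appropriate power of $e^{t_n}$ and using $t_n\to0$, exactly the statement $P(u_n)\to 0$, where
\begin{equation*}
P(u)=-\frac{1}{8\pi}\|u\|_2^4-V_0(u)+\frac2\b\|u\|_\b^\b+\frac2p\|u\|_p^p
\end{equation*}
is the Pohozaev-type functional obtained by differentiating $I_\b(u(\cdot/t))$ at $t=1$ (here the term $-\tfrac{1}{8\pi}\|u\|_2^4$ arises from differentiating the $-\tfrac{t^4\log t}{8\pi}$ factor). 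Meanwhile the vanishing of the $v$-derivative, transported back through $\eta$, gives $\|I'_\b(u_n)\|(1+\|u_n\|)\to 0$, and $\varphi(t_n,v_n)=I_\b(u_n)\to c_\b$. Thus $(u_n)_n$ is the desired Cerami sequence satisfying all three conclusions.

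The routine but slightly delicate points are: (i) checking that the dilation $\eta$ is continuous on $\R\times\Har$ and that $\varphi\in C^1$, which follows from the fact that $\Ha$ is the closure of $C_0^\infty(\RD)$ under a dilation-compatible norm together with the well-definedness of $I_\b$ proved above via \eqref{eq:incl}; (ii) verifying that the change of variables is measure-preserving in the sense needed to relate the norms of $u_n$ and $v_n$ uniformly, using $t_n\to0$; and (iii) translating the abstract Cerami condition for $\varphi$ on the product space into the pair of conditions for $I_\b$ on $\Har$ — this is precisely the content of \cite[Lemma~3.2]{DW} and requires no new idea here.

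The main obstacle, such as it is, will be bookkeeping the exponents in $\varphi$ and confirming that $\partial_t\varphi(t_n,v_n)\to0$ reduces cleanly to $P(u_n)\to0$ once $t_n\to0$: one must be careful that the $\log t$ term contributes the factor $-\tfrac{1}{8\pi}\|u\|_2^4$ with the correct sign and coefficient, since $\tfrac{d}{dt}\big(t^4\log t\big)\big|_{t=1}=1$ while the $e^{4t}V_0$ and $e^{2t}$-nonlinear terms contribute $4$ and $2$ respectively; after substituting $u_n=v_n(e^{-t_n}\cdot)$ and letting $n\to\infty$ these assemble into exactly $P$. No compactness is needed at this stage — that is deferred to the proof of Theorem~\ref{main3} — so the lemma is purely a min-max/deformation statement.
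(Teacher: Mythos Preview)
Your proposal is correct and follows essentially the same approach as the paper: the paper's proof simply reads ``The proof is definitely the same as in \cite{HIT}'', and the augmented-functional/dilation technique you describe (via \cite[Lemma~3.2]{DW}) is precisely the method of \cite{HIT} specialized to this setting. Your choice of the pure dilation $\eta(t,v)=v(e^{-t}\cdot)$ is the natural one here (it matches the scaling $u_t=u(\cdot/t)$ used in the preceding geometry proposition), and your computation of $\varphi$ and of $\partial_t\varphi|_{t=0}=P$ is accurate, so nothing further is needed.
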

 	 	\begin{proof}
 	 		The proof is definitely the same as in \cite{HIT}.
 	 	\end{proof}

  		 	Now we are ready for the following
	 \begin{proof}[Proof of Theorem \ref{main2}]
	 	Consider a sequence $(u_n)_n$ as in Lemma \ref{le:Cerami2}. Then we have
	 				\begin{align}
	 	&\frac 12 \ird |\n u_n|^2\, dx-  \frac 1 4V_0(u_n)+\ird \left(\frac 1 \b |u_n|^\b+\frac 1 p |u_n|^p\right)\, dx= c_\b +o_n(1)\label{fun2}\\
	 	&\ird |\n u_n|^2\, dx-  V_0(u_n)+ \ird (|u_n|^\b+|u_n|^p)\, dx= o_n(1)\label{ne2}\\
	 	&-\frac 1 {8\pi} \|u_n\|_2^4-  V_0(u_n)+2 \ird \left(\frac 1 \b |u_n|^\b+\frac 1 p |u_n|^p\right)\, dx=o_n(1).\label{po3}
	 	\end{align} 
	Comparing \eqref{fun2} and \eqref{po3} in order to delete $V_0(u_n)$ we get
		\begin{equation*}
			\frac12 \ird |\n u_n|^2\, dx + \frac 1{32\pi} \|u_n\|_2^4 + \frac 12 \ird \left(\frac 1 \b |u_n|^\b+\frac 1 p |u_n|^p\right)\, dx= c_\b+o_n(1)
		\end{equation*}
	and then we deduce the boundedness of $(u_n)_n$ in $\Har$. \\
	Up to a subsequence, we are allowed to assume that there exists $\bar u \in \Har$ such that 
	\begin{align*}
	&u_n\rightharpoonup\bar u\hbox{ in }\Har,\\
	&u_n\to\bar u\hbox{ in } L^q(\RD),\hbox{ for any } q>\b.
	\end{align*}  
	Now, the proof proceeds analogously to that of  Thoerem \ref{main2}, using  \eqref{eq:incl} in the place of  \eqref{inclusion}, and we arrive to show that $u_n\to \bar u$ in $\Har$. Since $(u_n)_n$ is a Cerami sequence at the mountain pass level, this is sufficient to conclude that $\bar u$ is a nontrivial solution.
	 \end{proof}

	\subsection{A nonexistence result in $\RT$}
	
	First we present the following Pohozaev identity, whose proof can be found in \cite{DM2}
		\begin{lemma}
			Under the assumptions of Theorem \ref{nonex}, if  $(u,\phi)\in H^1(\RT)\times \D$  solves $\mathcal P_-$ in $\RT$ and $W(u)\in L^1(\RT)$ then the following identity holds
				\begin{equation}\label{pohoid}
					\frac 12 \irt |\n u|^2\, dx +\frac 54 \irt \phi u^2 \, dx+ 3 \irt W(u)\, dx =0.
				\end{equation}
		\end{lemma}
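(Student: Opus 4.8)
The plan is to establish \eqref{pohoid} by the classical Pohozaev--Derrick argument, i.e.\ by testing the first equation of $(\mathcal P_-)$ against the dilation field $x\cdot\n u$. Concretely, I would multiply $-\Delta u-\phi u+W'(u)=0$ by $x\cdot\n u$, integrate over a ball $B_R\subset\RT$, integrate by parts, and then let $R\to+\infty$ along a suitable sequence so that the surface integrals over $\partial B_R$ vanish. Equivalently this is the infinitesimal form of the scaling $u_t(x):=u(x/t)$ together with the induced scaling $\phi_{u_t}(x)=t^2\phi_u(x/t)$ of the Poisson potential, evaluated at $t=1$. Before carrying this out I would record the regularity that makes $x\cdot\n u$ an admissible test function: exactly as in the earlier regularity step, one has $\phi_u\in L^\infty_{loc}$, hence by \cite[Theorem 8.8]{GT} and a bootstrap the analogue in $\RT$ gives $u\in C^2(\RT)$, so the pointwise manipulations are licit.

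Next I would compute the three bulk contributions separately. The kinetic term gives, by the standard identity $\irt(-\Delta u)(x\cdot\n u)\,dx=-\tfrac{N-2}{2}\|\n u\|_2^2$ with $N=3$, a multiple of $\irt|\n u|^2\,dx$. For the potential term, since $W=W(s)$ is autonomous by $W1$, I write $W'(u)(x\cdot\n u)=x\cdot\n\big(W(u)\big)$ and integrate by parts to obtain $-N\irt W(u)\,dx=-3\irt W(u)\,dx$; here $W2$ and the hypothesis $W(u)\in L^1(\RT)$ guarantee integrability and the absence of a boundary contribution. These two steps are routine.

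The coupling term is the delicate point and the one that genuinely uses the Poisson equation. Writing $u\,(x\cdot\n u)=\tfrac12\,x\cdot\n(u^2)$ and integrating by parts produces a term in $\irt(x\cdot\n\phi)u^2\,dx$ together with a multiple of $\irt\phi u^2\,dx$. To handle the first I would substitute $u^2=\Delta\phi$ and run the same dilation computation for $\phi$ alone, namely $\irt(x\cdot\n\phi)\Delta\phi\,dx=\tfrac{N-2}{2}\irt|\n\phi|^2\,dx$, and then use $\irt|\n\phi|^2\,dx=-\irt\phi\,\Delta\phi\,dx=-\irt\phi u^2\,dx$, which is legitimate because $\phi\in\D$. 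Collecting these manipulations fixes the coupling coefficient $\tfrac{N+2}{4}=\tfrac54$ in front of $\irt\phi u^2\,dx$, and assembling the three contributions yields \eqref{pohoid}.

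I expect the main obstacle to be not the algebra but the rigorous vanishing of the boundary terms on $\partial B_R$ as $R\to+\infty$. Both the Pohozaev computation for $u$ and the one for $\phi$ generate surface integrals involving $u,\n u,\phi,\n\phi$ weighted by $|x|$, and one must show that these tend to zero along some sequence $R_n\to+\infty$; this requires the decay afforded by $u\in H^1(\RT)$, $\phi\in\D$ together with the $C^2$ regularity, and is precisely the technical core carried out in the general nonautonomous setting of \cite{DM2}, to which I would appeal for the remaining details.
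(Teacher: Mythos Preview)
Your proposal is correct and matches the paper's approach exactly: the paper does not give an independent argument but simply refers to \cite{DM2}, and your sketch is precisely the Pohozaev computation carried out there (testing against $x\cdot\nabla u$, handling the coupling via the relation $u^2=\Delta\phi$ and $\|\nabla\phi\|_2^2=-\irt\phi u^2\,dx$, and controlling the boundary terms along a sequence $R_n\to\infty$), to which you yourself also appeal in the end.
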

	As an immediate consequence we have the following
		\begin{proof}[Proof of Theorem \ref{nonex}]
			As it is well known, since $\phi\in \D$ solves the second equation, it can be explicitly expressed by
				\begin{equation*}
					\phi(x)= \frac 1 {4\pi} \irt \frac{u^2(y)}{|x-y|}\, dy\ge 0.
				\end{equation*} 
			As a consequence, by \ref{pohoid} and $W2$, 
				$$0\le\frac 12 \irt |\n u|^2\, dx = - \frac 54 \irt \phi u^2\, dx - 3 \irt W(u)\, dx \le 0$$
			and then $u=0$. Of course, this implies that also $\phi=0$ and we conclude.
		\end{proof}


\begin{thebibliography}{99}
	
	\bibitem{AR}
	A. Ambrosetti and D. Ruiz, 
	{\it Multiple bound state for the Sch\"odinger-Poisson problem},
	 Commun. Contemp.
	Math. {\bf 10} (2008), no. 3, 391--404.
	
	\bibitem{ADP}
	A. Azzollini, P. d’Avenia and A. Pomponio,
	{\it On the Schrödinger-Maxwell equations under the effect of a
general nonlinear term}, Ann. Inst. H. Poincaré Anal. Non Linéaire (2010) {\bf 27}, 779--791.

	\bibitem{AP}
	A. Azzollini and A. Pomponio, 
	{\it Ground state solutions for the nonlinear Schr\"odinger-Maxwell equations},
	J. Math. Anal. Appl. {\bf 345} (2008), 90--108.
	
	
	\bibitem{Br}
	H.~Brezis, 
	{\it Functional Analysis, Sobolev Spaces and Partial Differential Equations},
	Springer, New York, 2011.
	
	
	\bibitem{CLL}
	I. Catto, C. Le Bris and P.-L. Lions,
	{\it On some periodic Hartree-type models for crystals},
	Ann. Inst. Henri Poincaré, Anal. Non Linéaire {\bf 19}, (2002), pp. 143--190.
	
	\bibitem{CW}
	S. Cingolani and T. Weth, 
	{\it On the planar Schr\"odinger-Poisson system}, Ann. Inst. Henri Poincaré, Anal. Non Linéaire {\bf 33}, (2016), 169--197.
	
	\bibitem{DM}
	T. D’Aprile and D. Mugnai, 
	{\it Solitary waves for nonlinear Klein-Gordon-Maxwell and Schr\"odinger-Maxwell
equations}, Proc. Roy. Soc. Edinburgh Sect. A {\bf 134}, (2004), 893--906.

		\bibitem{DM2}
	T. D’Aprile and D. Mugnai, 
	{\it Non-existence results for the coupled Klein-Gordon-Maxwell equations}, Adv. Nonlinear Stud., 4, (2004), 307--322.
	
	\bibitem{DW}
	M. Du and T. Weth, 
	{\it Ground states and high energy solutions of the planar Schr\"odinger-Poisson
	system}, Nonlinearity {\bf  30}, (2017), 3492--3515.

	\bibitem{GT}
	D. Gilbarg and N. Trudinger, Elliptic Partial Differential Equations of Second Order, second edition, Grundlehren Math. Wiss., vol. 224, Springer, Berlin, 1983.
	
	\bibitem{HIT}
	J. Hirata, N. Ikoma and  K. Tanaka, 
	{\it Nonlinear scalar field equations in $\RN$: mountain pass and
	symmetric mountain pass approaches}, Topol. Methods Nonlinear Anal. {\bf  35}, (2010), 253--276.

	\bibitem{J}
	L. Jeanjean, 
	{\it On the existence of bounded Palais-Smale sequences and application to a Landesman-Lazer-type problem set on $\R^N$}, Proc. R. Soc. Edinb., Sect. A, Math. 129 (1999), 787–809.
	
	\bibitem{K}
	H. Kikuchi, 
	{\it On the existence of a solution for elliptic system related to the Maxwell-Schrödinger
equations}, Nonlinear Anal. Theory Methods Appl. {\bf  67} (2007), 1445--1456.
	
	\bibitem{M}
	D. Mugnai,
	{\it The Schr\"odinger–Poisson System with Positive Potential},
	Commun. Partial Differ. Equ. {\bf 36}, (2013), pp. 1009--1117.
	
	\bibitem{St}
	W.A.~Strauss,
	{\em Existence of solitary waves in higher dimensions}, 
	Comm. Math. Phys. {\bf 55} (1977), 149--162.
	
	\bibitem{R}
	D. Ruiz, 
	{\it The Schr\"odinger-Poisson equation under the effect of a nonlinear local term}, J. Functional Anal.
	{\bf  137}, (2006), 655--674.
	
	\bibitem{S}
	J. Stubbe, 
	{\it Bound states of two-dimensional Schr\"odinger-Newton equations}, arXiv: 0807.4059
	-v1, 2008.
	
	\bibitem{WCR}
	L. Wen, S. Chen and V.D. Rădulescu, (2020). 
	{\it Axially symmetric solutions of the Schrödinger–Poisson system with zero mass potential in $\R^2$}, Applied Mathematics Letters, 106244.
	

\end{thebibliography}
\end{document}